\documentclass[12pt]{amsart}
\usepackage{amsmath,amssymb,amsfonts}
\usepackage[all]{xy}

\usepackage[T5,T1]{fontenc}
\usepackage{mathpazo}
\usepackage{hyperref}
\usepackage{a4wide}

\theoremstyle{plain}
\newtheorem{thm}{Theorem}[section]
\newtheorem{lem}[thm]{Lemma}
\newtheorem{cor}[thm]{Corollary}
\newtheorem{prop}[thm]{Proposition}

\theoremstyle{definition}

\newtheorem{rmk}[thm]{Remark}

\newcommand{\sA}{{\mathcal A}}
\newcommand{\sB}{{\mathcal B}}

\newcommand{\sL}{{\mathcal L}}


\newcommand{\F}{{\mathbb F}}

\newcommand{\U}{{\mathbb U}}

\begin{document}
\sloppy

\title{Zassenhaus filtrations as intersections} 
\dedicatory{Dedicated to Professor {\fontencoding{T5}\selectfont Ng\^o Vi\d\ecircumflex{}t Trung} on the occasion of his 70th birthday }

 \author{ J\'an Min\'a\v{c},  {\fontencoding{T5}\selectfont Nguy\~ \ecircumflex n Duy T\^an}  and {\fontencoding{T5}\selectfont Nguy\~ \ecircumflex n Th\d{i} Tr\`a}}

\address{Department of Mathematics, University of Western Ontario, London, Ontario, Canada N6A 5B7}
\email{minac@uwo.ca}
  \address{ Faculty of Mathematics and 	Informatics, Hanoi University of Science and Technology, 1 Dai Co Viet Road, Hanoi, Vietnam } 
\email{tan.nguyenduy@hust.edu.vn}

\address{Department of Mathematics, Hanoi Pedagogical No 2, Xuan Hoa, Vinh Phuc, Vietnam}
\email{nguyentra.bsu@gmail.com}

\thanks{ JM is partially supported  by the Natural Sciences and Engineering Research Council of Canada (NSERC) grant R0370A01. He gratefully acknowledges the Western University Faculty of Science Distinguished Professorship  and the support of the Western Academy for Advanced Research. NDT and NTT are partially supported by the National Foundation for Science and Technology Development (NAFOSTED) grant 101.04-2023.21}
 \begin{abstract}
Zassenhaus filtrations of profinite groups are an important tool to study profinite groups.
In this paper, we describe Zassenhaus filtrations of profinite groups as intersections of  kernels of certain representations. 
In this way we introduce a link between studying profinite groups with methods of Zassenhaus filtrations and representation theory.
\end{abstract}
\maketitle
\section{Introduction}
In our paper we study profinite groups, their Zassenhaus filtrations and certain representations of profinite groups. We refer the reader for the basic theory of profinite groups to  \cite{Ko,NSW,DDMS}. Profinite groups are essential in Galois theory. In fact each Galois group of a Galois extension is a profinite group. Conversely for each profinite group there exists a Galois field extension whose Galois group is isomorphic to the given profinite group. (See \cite{Wa}.)
Thus we see that we can view the theory of profinite groups as a part of Galois theory. In general profinite groups can be quite complicated and their study can be therefore challenging. In order to use methods of linear algebra, various descending series of profinite groups were introduced. We are looking for such filtrations where we can make related abelian subquotients. Then we can use basic linear algebra methods related to these abelian groups.

From now on $p$ stands for a prime number. All our subgroups of profinite groups will be always considered closed subgroups.
Let $G$ be a profinite group/ 
The \emph{$p$-Zassenhaus filtration} (or simply \emph{Zassenhaus filtration}, if the prime is clear from the context) of $G$ is 
defined inductively by
\begin{equation}\label{zassenhaus defi}
\begin{array}{lclr}
G_{(1)}&=&G& \\
 G_{(n)}&=&G_{(\lceil n/p\rceil)}^p\cdot\prod_{i+j=n}[G_{(i)},G_{(j)}], & n\geq2.
\end{array}
\end{equation}
Here $\lceil n/p\rceil$ is the least integer $h$ such that $hp\geq n$; and for two subgroups $H$ and $K$ of $G$, we denote $[H,K]$  the (closed) subgroup of $G$ generated by the commutators
\[ [x,y] = (y^{-1})^x\cdot y = x^{-1}y^{-1}xy , \qquad x\in H , y\in K .\]
Similarly, for $k\geq1$, $H^k$ is the (closed) subgroup of $G$ generated by the elements $g^k$, $g\in H$.

I. Efrat, in his beautiful pioneering paper \cite{Ef}, was able to  show the following result. 
\begin{thm}
\label{thm:Efrat}
Let $S$ be a free pro-$p$-group and $n\geq1$. Then $S_{(n)}$ is the intersection of all kernels of linear representations $\rho:G\to {\rm GL}_n(\F_p)$.
\end{thm}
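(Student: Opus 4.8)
The plan is to reformulate the statement inside the completed group algebra $\F_p[[S]]$ and then, exploiting that $S$ is free, inside a ring of noncommutative power series. Write $I\subseteq\F_p[[S]]$ for the augmentation ideal. The starting point is the dimension subgroup theorem of Jennings--Lazard--Quillen (see e.g. \cite{DDMS}): for every profinite group $G$ one has $G_{(n)}=\{g\in G:g-1\in I^n\}$. Granting this, the theorem reduces to showing, for $s\in S$, that $s-1\in I^n$ if and only if $\rho(s)=1$ for every continuous homomorphism $\rho\colon S\to{\rm GL}_n(\F_p)$. I would prove the two implications separately, expecting the second to carry essentially all the difficulty.

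For the first implication I would argue as follows, and this part uses only that $S$ is pro-$p$, not that it is free. Given $\rho$, its image is a finite $p$-group, hence conjugate into the group $U_n(\F_p)$ of upper unitriangular matrices, a Sylow $p$-subgroup of ${\rm GL}_n(\F_p)$. Extending $\rho$ to an $\F_p$-algebra homomorphism $\bar\rho\colon\F_p[[S]]\to M_n(\F_p)$, every $\bar\rho(g-1)=\rho(g)-I_n$ is strictly upper triangular, so $\bar\rho(I)$ lands in the span $\mathfrak n$ of the strictly upper triangular matrices; since $\mathfrak n^n=0$ we get $\bar\rho(I^n)=0$, and therefore $s-1\in I^n$ forces $\rho(s)=I_n$.

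For the converse I would use the Magnus isomorphism $\F_p[[S]]\cong\F_p\langle\langle X_i\rangle\rangle$ onto the completed free associative algebra on indeterminates $X_i$ attached to a free generating set $(x_i)$ of $S$ via $x_i\mapsto 1+X_i$; under it, $I^n$ corresponds to the set of power series all of whose homogeneous components have degree $\ge n$. If $s\notin S_{(n)}$, then $s-1=\sum_w c_w\,w$ (sum over words $w$ in the $X_i$) has a nonzero coefficient $c_{w_0}$ on some word $w_0=X_{i_1}\cdots X_{i_m}$ with $1\le m\le n-1$. The key step, and the point where the real work lies, is to produce a representation of size \emph{exactly} $n$ -- rather than the much larger size coming from the regular representation of $\F_p[[S]]/I^n$ -- that detects this monomial. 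My proposal is to set, in terms of the matrix units $e_{a,b}$ on $\F_p^n$,
\[
Y_k=\sum_{\substack{1\le l\le m\\ i_l=k}}e_{l,\,l+1},\qquad \rho(x_k)=I_n+Y_k ,
\]
so that each $Y_k$ is strictly upper triangular and supported on the superdiagonal of the top-left $(m+1)\times(m+1)$ block (note $m+1\le n$); since $S$ is free pro-$p$ and $U_n(\F_p)$ is a finite $p$-group, this prescription extends to a continuous homomorphism $\rho\colon S\to{\rm GL}_n(\F_p)$.

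Finally I would evaluate $\rho(s)$. In the induced algebra homomorphism a word $w$ maps to the product $Y_w$ of the corresponding $Y_k$'s; a short induction on word length, using $e_{a,a+1}e_{a+1,a+2}=e_{a,a+2}$ and $e_{a,a+1}e_{b,b+1}=0$ for $b\ne a+1$, shows that $Y_w$ is supported on the entries $e_{a,\,a+|w|}$, and in particular that $Y_w$ can contribute to the $(1,m+1)$-entry only when $|w|=m$, in which case it contributes $1$ precisely when $w=w_0$. Hence the $(1,m+1)$-entry of $\rho(s)-I_n=\bar\rho(s-1)$ equals $c_{w_0}\ne 0$, so $\rho(s)\ne 1$, as required. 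This settles the finite-rank case, and the infinite-rank case follows from the same formulas (or by reducing to finite-rank free quotients of $S$). Thus the main external input is the dimension subgroup theorem, and the crux of the argument is the choice of the matrices $Y_k$: one superdiagonal nilpotent block of size $m+1\le n$ simultaneously isolates $w_0$ in a single matrix entry and annihilates every contribution of higher degree, so that no cancellation can spoil detection.
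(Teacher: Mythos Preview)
Your argument is correct. The paper itself does not supply a proof of this theorem: it quotes it as Efrat's result \cite{Ef} and remarks that the original proof proceeds via Massey products and duality between profinite groups and subgroups of $H^2$, while \cite{MTE} and \cite{Ef2} give a more direct proof ``using Magnus homomorphisms.'' Your write-up is a clean implementation of the latter route: you invoke the Jennings--Lazard--Quillen identification $S_{(n)}=\{s:s-1\in I^n\}$, the Magnus isomorphism $\F_p[[S]]\cong\F_p\langle\langle X_i\rangle\rangle$, and then detect a shortest nonvanishing monomial of $s-1$ by an explicit superdiagonal unipotent. So methodologically you are aligned with \cite{MTE,Ef2} rather than with Efrat's original cohomological argument. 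One small point worth saying out loud: for $n=1$ the statement is trivial since $S_{(1)}=S$ and every continuous map from a pro-$p$ group to $\mathrm{GL}_1(\F_p)=\F_p^{\times}$ is trivial; your Magnus argument implicitly assumes $n\ge2$ so that $1\le m\le n-1$ is nonempty. Also, in the infinite-rank case the extension $x_k\mapsto I_n+Y_k$ is legitimate precisely because only the finitely many $k$ occurring in $w_0$ are sent to a nontrivial element, which is exactly what the universal property of a free pro-$p$ group on a possibly infinite set requires; you note this, but it is the one place where a careless reader might worry.
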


In his proof of this result, I. Efrat uses Massey products and some duality principles between profinite groups and subgroups of cohomology groups of degree 2.
In \cite{MTE}, and independently in \cite{Ef2}, the authors provides another more direct proof of this result using Magnus homomorphisms.
Also in \cite{MT}, the authors formulate a conjecture that the above theorem is still true for the maximal pro-$p$ quotient of any field containing a primitive $p$th root of unity. This conjecture is called the "kernel $n$-unipotent conjecture". This conjecture was further investigated in \cite{MTE} and \cite{MT2}. In \cite{MTE} the conjecture was established for odd rigid fields and for Demushkin groups of rank 2. In \cite{MT2} the kernel unipotent conjecture was established for Demushkin finitely generated pro-$p$ groups in the case when $n=4$. We remark that in the appendix of \cite{MTE}, it is shown that for each $n\geq 3$, there are examples of pro-$p$ groups which do not satisfy the kernel $n$-unipotent property.

The aim of this paper is to describe $G_{(n)}$ as the kernel of some groups representations for a certain class of profinite groups $G$.   Our main result is the following theorem and one of the key ingredients of the proof is Theorem~\ref{thm:Efrat}.
\begin{thm}[=Theorem~\ref{thm:main}]
Let $n$ be an integer $\geq 1$. Let $S$ be a free pro-$p$-group and let $R$ be a normal subgroup of $S$ with $R\leq S_{(n)}$. Let $G=S/R$. Then \[ G_{(n+1)}=\bigcap \ker (\rho\colon G\to \U(\sA)),\] 
where the intersection is taken as $\sA$ runs over the collection of all rank $n$ multiplicative systems with $A_{1,n+1}=\F_p$ and $\rho$ runs over all homomorphism $G\to \U(\sA)$.
\end{thm}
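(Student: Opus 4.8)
My plan is to establish the two inclusions separately. The inclusion $G_{(n+1)}\subseteq\bigcap\ker\rho$ is formal once one knows that $\U(\sA)_{(n+1)}=1$ for every rank $n$ multiplicative system $\sA$. To see this, filter $\U(\sA)$ by the subgroups $\U(\sA)^{(k)}$ consisting of those matrices that are the identity on the first $k-1$ superdiagonals, so $\U(\sA)^{(1)}=\U(\sA)$ and $\U(\sA)^{(n+1)}=1$; one checks directly that $[\U(\sA)^{(i)},\U(\sA)^{(j)}]\subseteq\U(\sA)^{(i+j)}$ and $(\U(\sA)^{(i)})^p\subseteq\U(\sA)^{(ip)}$, the latter because $(I+N)^p=I+N^p$ over $\F_p$. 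Since the Zassenhaus filtration is the fastest descending series with these two properties, $\U(\sA)_{(k)}\subseteq\U(\sA)^{(k)}$ for all $k$, so $\U(\sA)_{(n+1)}=1$. Hence any continuous homomorphism $\rho\colon G\to\U(\sA)$ satisfies $\rho(G_{(n+1)})=\rho(G)_{(n+1)}\subseteq\U(\sA)_{(n+1)}=1$, i.e. $G_{(n+1)}\subseteq\ker\rho$.

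For the reverse inclusion, fix $g\in G\setminus G_{(n+1)}$ and a lift $\tilde g\in S$. For any normal $R\triangleleft S$ one has $G_{(m)}=S_{(m)}R/R$ for all $m$; since moreover $R\leq S_{(n)}$, this gives $G/G_{(n)}\cong S/S_{(n)}$ and the condition $g\notin G_{(n+1)}$ becomes $\tilde g\notin T:=S_{(n+1)}R$. It suffices to construct a rank $n$ multiplicative system $\sB$ with $B_{1,n+1}=\F_p$ and a homomorphism $\rho\colon S\to\U(\sB)$ with $\rho(R)=1$ and $\rho(\tilde g)\neq1$, since then $\rho$ descends to $G$ and is nontrivial on $g$. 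If $\tilde g\notin S_{(n)}$, then the image of $g$ in $G/G_{(n)}\cong S/S_{(n)}$ is nontrivial, and Theorem~\ref{thm:Efrat} applied with $n$ in place of $n+1$ (conjugating the image into the unitriangular subgroup $\U_n(\F_p)\leq{\rm GL}_n(\F_p)$) yields $\sigma\colon S\to\U_n(\F_p)$ with $\sigma(\tilde g)\neq1$; as $\U_n(\F_p)_{(n)}=1$ we have $\sigma(S_{(n)})=1\supseteq\sigma(R)$, and composing with the block inclusion $\U_n(\F_p)\inj\U_{n+1}(\F_p)=\U(\sB)$, where $\sB$ is the rank $n$ system all of whose entries equal $\F_p$, gives the required map.

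Now suppose $\tilde g\in S_{(n)}\setminus T$. Put $V:=S_{(n)}/S_{(n+1)}$, a finite-dimensional $\F_p$-vector space, and $W:=T/S_{(n+1)}\subseteq V$; then $\bar{\tilde g}\notin W$. To each $\rho\colon S\to\U_{n+1}(\F_p)$ attach the functional $\lambda_\rho\in V^\ast$ induced by $\rho|_{S_{(n)}}$; this is well defined because $\rho(S_{(n)})\subseteq\U_{n+1}(\F_p)_{(n)}=Z(\U_{n+1}(\F_p))\cong\F_p$ (the $(1,n+1)$-corner) and $\rho(S_{(n+1)})\subseteq\U_{n+1}(\F_p)_{(n+1)}=1$. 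By Theorem~\ref{thm:Efrat} applied with $n+1$, for every nonzero $v\in V$ some $\rho$ has $\lambda_\rho(v)\neq0$, so the $\lambda_\rho$ span $V^\ast$; hence we may choose $\lambda=\sum_{k=1}^m c_k\lambda_{\rho_k}\in W^{\perp}$ with $\lambda(\bar{\tilde g})\neq0$. Let $Q$ be the quotient of $\U_{n+1}(\F_p)^m$ by the central subgroup $\{(x_k)_k:\sum_k c_kx_k=0\}$ inside the product of the corners; then $Q\cong\U(\sB)$ for a rank $n$ multiplicative system $\sB$ with $B_{1,n+1}=\F_p$. The composite $\rho\colon S\xrightarrow{(\rho_k)_k}\U_{n+1}(\F_p)^m\surj Q$ has induced functional $\lambda$; it therefore kills $S_{(n+1)}$ (since $Q_{(n+1)}=1$) and kills $R\subseteq S_{(n)}$ (since the image of $R$ in $V$ is $W$ and $\lambda|_W=0$), while $\rho(\tilde g)\in Q_{(n)}\cong\F_p$ equals $\lambda(\bar{\tilde g})\neq0$. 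Thus $\rho$ factors through $G$ and detects $g$, which completes the argument.

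The crux is the second case, and the step I expect to be most delicate is the claim that $Q$ — a finite product of copies of $\U_{n+1}(\F_p)$ modulo a codimension-one subspace of its central corner — is again the unipotent group of a genuine rank $n$ multiplicative system with one-dimensional top corner; this amounts to checking that the class of such groups is closed under finite direct products and under the relevant central quotients, which should follow mechanically from the definition of a multiplicative system but must be verified with care. A secondary point to pin down is the identification $\U_{n+1}(\F_p)_{(n)}=Z(\U_{n+1}(\F_p))$, $\U_{n+1}(\F_p)_{(n+1)}=1$ (and $\U_n(\F_p)_{(n)}=1$), which follows from the Jennings--Lazard description of the Zassenhaus filtration together with $\gamma_i(\U_{n+1}(\F_p))^p\subseteq\gamma_{ip}(\U_{n+1}(\F_p))$.
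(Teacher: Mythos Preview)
Your argument is correct and takes a genuinely more direct route than the paper. The paper develops a bilinear pairing between $G_{(k)}/G_{(k+1)}$ and the subgroup $\ker(\Phi^k(G/G_{(k)}) \to \Phi^k(G/G_{(k+1)}))$ of $H^2$ built from generalized Massey products, proves (Theorem~\ref{thm:kernel-pairing}) that the intersection statement for all ranks $\leq n$ is equivalent to non-degeneracy of these pairings, and then establishes that non-degeneracy for $G=S/R$ from Efrat's theorem via Pontryagin duality (Corollaries~\ref{cor:perfect}--\ref{cor:non-degenerate pairing}). You bypass this machinery entirely: from Efrat's theorem you extract that the corner functionals $\lambda_\rho$ span the dual of $S_{(n)}/S_{(n+1)}$, choose a finite combination vanishing on $RS_{(n+1)}/S_{(n+1)}$ but not on $\bar{\tilde g}$, and realize it as the corner map of a single representation by taking a direct product of copies of $\U_{n+1}(\F_p)$ and collapsing a hyperplane in the product of centers. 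Your approach is shorter and explicitly constructive; the paper's buys a cohomological criterion (the pairing) of independent interest and connects the statement to the Massey-product groups $\Phi^n$.

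Two small points to tighten. First, $V=S_{(n)}/S_{(n+1)}$ need not be finite-dimensional, since $S$ is not assumed finitely generated; what you actually need is that the $\lambda_\rho$ span the \emph{continuous} dual $\hat V$, and this follows from the separating-points property because $\hat V$ is discrete (any subgroup of $\hat V$ with trivial annihilator in $V$ is all of $\hat V$), after which the desired $\lambda\in W^{\perp}$ with $\lambda(\bar{\tilde g})\neq 0$ exists by Pontryagin duality for $V/W$. Second, your ``delicate'' step $Q\cong\U(\sB)$ is in fact routine: take $B_{ij}=\F_p^{\,m}$ for $(i,j)\neq(1,n+1)$ and $B_{1,n+1}=\F_p$, keep all pairings componentwise except that each $\mu_{1,j,n+1}$ is post-composed with $(x_k)\mapsto\sum_k c_k x_k$; since $B_{1,n+1}$ occurs only as the target in the associativity squares, these still commute, and the evident surjection $\U_{n+1}(\F_p)^m\twoheadrightarrow\U(\sB)$ has exactly the central kernel you specified.
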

See below, in Section~2, for the definition of a (rank~$n$) multiplicative system $\mathcal{A}$ and the group $\U(\mathcal{A})$. In Section~2 we also establish a convention that we suppress the subindex in $\U_{n,1}(\mathcal{A})$ and denote this group as simply $\U(\mathcal{A})$.

The proof of Theorem~\ref{thm:main} relies on Theorem~\ref{thm:kernel-pairing} and Corollary~\ref{cor:non-degenerate pairing}. In Theorem~\ref{thm:kernel-pairing}, we show that our desired conclusion in Theorem~\ref{thm:main} about a characterization of the $n$th term of the $p$-Zassenhaus filtration of $G$ in terms of the kernels of certain representations of $G$ is equivalent to a perfect pairing between subquotients of Zassenhaus filtration and certain subgroups of corresponding cohomology groups. A good part of the Section~3 is dedicated to a definition of the pairing and establishing results leading to Theorem~\ref{thm:kernel-pairing}.

\section{Multiplicative systems} Let $n$ be an integer $\geq 1$. In this section we use rank $n$  multiplicative systems which were introduced in \cite[page 183]{Dwy}.
Recall that a rank $n$ multiplicative system $\sA$ (of coefficient groups) is defined to be an array
\[
\sA=\{A_{ij}:1\leq i<j\leq n+1 \}
\]
of abelian groups, together with pairings
\[
\mu=\mu_{ijk}\colon A_{ij}\otimes A_{jk}\to A_{ik}
\]
which are associative in the sense that the following diagrams commute:
\[
\xymatrix{
A_{ij}\otimes A_{jk}\otimes A_{kl} \ar@{->}[r]^{1\otimes \mu} \ar@{->}[d]^{\mu\otimes 1} & A_{ij}\otimes A_{jl}\ar@{->}[d]^\mu\\
A_{ik}\otimes A_{kl} \ar@{->}[r]^\mu &A_{il}.
}
\]
We shall denote the pairing $\mu$ by juxtaposition of elements. 

\emph{ In this paper we  consider multiplicative systems of finite dimensional $\F_p$-vector spaces.}

For each integer $d\geq 1$, let $V_{n,d}(\sA)$  be the set of arrays
\[
\{r_{ij}\in A_{ij}\mid r_{ij}=0 \text{ if } j-i\leq d-1\}. 
\]
In particular, $V_{n,1}$ is the set of arrays $\{r_{ij}\in A_{ij}\mid 1\leq i<j\leq n+1\}$ (no restrictions on $r_{ij}$). 
One can check that $V_{n,d}(\sA)$ is an $\F_p$-algebra with addition and multiplication operations given by the following formulas
\[
\{r_{ij}\}+\{r^\prime_{ij}\}=\{t_{ij}\}, \text{ where } t_{ij}=r_{ij}+r^\prime_{ij},
\] 
and
\[
\{r_{ij}\}\cdot \{r_{ij}^\prime\}=\{u_{ij}\}, \text{ where } u_{ij}=\sum_{k=i+1}^{j-1} r_{ik} r^\prime_{kj} \text{ if } i<j-1 \text{ and } u_{ij}=0 \text{ if } i=j-1.
\]
Note that $V_{n,d}(\sA)V_{n,d^\prime}(\sA)\subseteq V_{n,d+d^\prime}(\sA)$ and that $V_{n,d}(\sA)=0$ for $d>n$. 

Let $\U_{n,d}(\sA)$ be the set of elements of the form $1+a$ with $a\in V_{n,d}(\sA)$. (Here $1+a$ is just a formal sum, and $1+a=1+b$ means exactly that $a=b$.) Then $\U_{n,d}(\sA)$ is a group with group operation given by
\[
(1+a)(1+b)=1+a+b+ab.
\]
Thus we see that the formal element $1$ introduced above indeed plays the role of the identity element in our group $\U_{n,d}(\sA)$.

We write simply $\U(\sA)$ for $\U_{n,1}(\sA)$. Set 
\[ Z(\sA)=\U_{n,n}(\sA)=1+V_{n,n}(\sA)=\{1+a\mid a\in V_{n,n}(\sA) \}.\] 
Then $Z(\sA)$ is a central subgroup of $\U(\sA)$. We define
$\overline{\U}(\sA)=\U(\sA)/Z(\sA)$. 
\begin{rmk}
If we choose $A_{ij} = \F_p$ for every $1 \leq i < j \leq n+1$, with the usual addition and multiplication, then $\U(\sA) \cong \U_{n+1}(\F_p)$. Here $\U_{n+1}(\mathbb{F}_p)$ is the group of upper triangular unipotent $(n+1) \times (n+1)$-matrices with entries in $\mathbb{F}_p$.
\end{rmk}

\begin{rmk}
\label{rmk:2.2}
Suppose that $n \geq 2$ and $\bar{\mathcal{A}} = \{ A_{ij} \}$ is a rank $n-1$ multiplicative system. Then there exists a rank $n$ multiplicative system $\mathcal{A}$ such that the group $\U(\bar{\mathcal{A}})$ can be embedded in the group $\U(\mathcal{A})$. This can be done as follows. Set $A_{i,n+1} = \mathbb{F}_p$ and set
\[
\mu_{ij,n+1} \colon A_{ij} \otimes A_{j,n+1} \to A_{i,n+1}
\]
to be the zero map, for each $1 \leq i < j \leq n$. Let $\mathcal{A} = \{ A_{ij} \mid 1 \leq i < j \leq n+1 \}$. Then $\mathcal{A}$ is a rank $n$ multiplicative system. Now for each
\[
\bar{a} = \{ r_{ij} \in A_{ij} \mid 1 \leq i < j \leq n \} \in V_{n-1,1}(\bar{\mathcal{A}}),
\]
set
\[
a = \{ r_{ij} \in A_{ij} \mid 1 \leq i < j \leq n+1 \} \in V_{n,1}(\mathcal{A}),
\]
where $r_{i,n+1} = 0$, $\forall\, 1 \leq i \leq n$. We define a map $\iota \colon \U(\bar{\mathcal{A}}) \to \U(\mathcal{A})$ by $\iota(1 + \bar{a}) = 1 + a$. Then $\iota$ is an injective group homomorphism.
\end{rmk}

Recall that for a profinite group $G$, the descending central series $(G_i)$ is defined inductively by 
\[
G_1=G, \quad G_{i+1}=[G_i,G].
\]
The following formula, which is due to M Lazard (see \cite[Theorem 11.2]{DDMS}), provides a relation between the $p$-Zassenhauss filtrations and the descending central series of a profinite group $G$,
\[
G_{(n)}=\prod\limits_{ip^k\geq n}G_i^{p^k}, \quad \text{ for all } n\geq 1. 
\]

\begin{lem}
\label{lem:2.3}
Let $d,d^\prime,k\geq 1$ be integers.
\begin{enumerate}
\item If $u \in \U_{n,d}(\sA)$ and $v\in \U_{n,d^\prime}(\sA)$, then $[u,v]\in \U_{n,d+d^\prime}$ and $u^{p^k}\in \U_{n,dp^k}(\sA)$.
\item One has $\U(\sA)_k \subseteq \U_{n,k}(\sA)$ and $\U_{n,d}(\sA)^{p^k} \subseteq \U_{n,dp^k}(\sA)$.
\item One has
$
\U(\sA)_{(k)}\subseteq \U_{n,k}(\sA). 
$
In particular, this implies that 
\[
\U(\sA)_{(n+1)}=1 \text{ and } \overline{\U}(\sA)_{(n)}=1. 
\]
\end{enumerate}
\end{lem}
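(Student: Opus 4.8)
The plan is to prove the three parts in order, each building on the previous one, with everything ultimately reducing to bookkeeping with the filtration $\U_{n,d}(\sA)$ and the multiplication rule $V_{n,d}(\sA)V_{n,d'}(\sA)\subseteq V_{n,d+d'}(\sA)$ already recorded in the text. For part (1), write $u=1+a$ with $a\in V_{n,d}(\sA)$ and $v=1+b$ with $b\in V_{n,d'}(\sA)$. The commutator $[u,v]=u^{-1}v^{-1}uv$ can be computed directly: first note $u^{-1}=1+\sum_{m\geq 1}(-1)^m a^m$, a finite sum since $a^m\in V_{n,md}(\sA)=0$ for $md>n$, and similarly for $v^{-1}$. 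Expanding $[u,v]-1$, all the linear terms $a$, $-a$, $b$, $-b$ cancel, and every surviving monomial contains at least one factor from $V_{n,d}(\sA)$ \emph{and} at least one factor from $V_{n,d'}(\sA)$, hence lies in $V_{n,d+d'}(\sA)$; so $[u,v]\in\U_{n,d+d'}(\sA)$. For the power statement $u^{p^k}\in\U_{n,dp^k}(\sA)$, I would argue by induction on $k$: it suffices to treat $k=1$, i.e. $u^p\in\U_{n,dp}(\sA)$, and then iterate. Writing $u^p=(1+a)^p=1+\sum_{m=1}^{n}\binom{p}{m}a^m$, the binomial coefficients $\binom{p}{m}$ vanish in $\F_p$ for $1\leq m\leq p-1$, so the sum runs effectively over $m\geq p$, and $a^m\in V_{n,md}(\sA)\subseteq V_{n,pd}(\sA)$ for $m\geq p$. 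Hence $u^p\in\U_{n,pd}(\sA)$, and induction gives $u^{p^k}\in\U_{n,dp^k}(\sA)$.

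For part (2), the inclusion $\U(\sA)_k\subseteq\U_{n,k}(\sA)$ follows by induction on $k$ from part (1): $\U(\sA)_1=\U(\sA)=\U_{n,1}(\sA)$, and if $\U(\sA)_k\subseteq\U_{n,k}(\sA)$ then $\U(\sA)_{k+1}=[\U(\sA)_k,\U(\sA)]\subseteq[\U_{n,k}(\sA),\U_{n,1}(\sA)]\subseteq\U_{n,k+1}(\sA)$ by part (1) (using that the right-hand side is a subgroup, so it contains the subgroup generated by the commutators). The inclusion $\U_{n,d}(\sA)^{p^k}\subseteq\U_{n,dp^k}(\sA)$ is immediate from the power statement in part (1), since $\U_{n,dp^k}(\sA)$ is a subgroup containing every $u^{p^k}$ with $u\in\U_{n,d}(\sA)$.

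For part (3), I would use Lazard's formula $\U(\sA)_{(k)}=\prod_{ip^j\geq k}\U(\sA)_i^{p^j}$ quoted just above the lemma. For each pair $(i,j)$ with $ip^j\geq k$, part (2) gives $\U(\sA)_i\subseteq\U_{n,i}(\sA)$, hence $\U(\sA)_i^{p^j}\subseteq\U_{n,i}(\sA)^{p^j}\subseteq\U_{n,ip^j}(\sA)\subseteq\U_{n,k}(\sA)$, the last inclusion because $ip^j\geq k$ and the $\U_{n,\bullet}(\sA)$ form a descending chain. Since $\U_{n,k}(\sA)$ is a subgroup, the product of all these subgroups is contained in it, giving $\U(\sA)_{(k)}\subseteq\U_{n,k}(\sA)$. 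The two consequences then follow by specializing: $\U(\sA)_{(n+1)}\subseteq\U_{n,n+1}(\sA)=1+V_{n,n+1}(\sA)=1$ since $V_{n,d}(\sA)=0$ for $d>n$; and $\overline{\U}(\sA)_{(n)}$ is the image of $\U(\sA)_{(n)}\subseteq\U_{n,n}(\sA)=Z(\sA)$ in $\overline{\U}(\sA)=\U(\sA)/Z(\sA)$, hence trivial.

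The only mildly delicate point — and the one I would be most careful with — is the finiteness and convergence of the series expansions for $u^{-1}$ and for the commutator, i.e. making sure all the formal manipulations are legitimate; but this is automatic because $V_{n,d}(\sA)$ is nilpotent (it vanishes for $d>n$), so every relevant sum is finite. Everything else is a routine induction once part (1) is in hand.
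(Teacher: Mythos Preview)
Your proposal is correct and follows essentially the same approach as the paper. The only cosmetic differences are in part (1): for the commutator the paper factors $(1+a)(1+b)(1+a)^{-1}(1+b)^{-1}$ as $1+(ab-ba)\bigl(1+\sum_{i\ge1}(-1)^i(a+b+ba)^i\bigr)$ to make the membership in $\U_{n,d+d'}(\sA)$ immediate, whereas you argue by direct expansion and cancellation of pure-$a$ and pure-$b$ terms; and for the power the paper writes $(1+a)^{p^k}=1+a^{p^k}$ in one step rather than inducting on $k$. Parts (2) and (3) match the paper's argument exactly.
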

\begin{proof} (1) Write $u=1+a$ and $v=1+b$, where $a\in V_{n,d}(\sA)$ and $b\in V_{n,d^\prime}(\sA)$. Then
\begin{align*}
[u^{-1},v^{-1}]&=(1+a)(1+b)(1+a)^{-1}(1+b)^{-1}\\
&=(1+a+b+ba + ab-ba)(1+b+a+ba)^{-1}\\
&=1+ (ab-ba)(1+\sum_{i=1}^\infty (-1)^i (a+b+ba)^i).
\end{align*}

Since $ab-ba\in V_{n,d+d^\prime}(\sA)$, we see that $[u,v]$ is in $V_{n,d+d^\prime}(\sA)$. 

We also have
$ u^{p^k}=(1+a)^{p^k}=1+a^{p^k}$, which  is an element of  $\U_{n,dp^k}(\sA)$.

(2) just follows from (1).

We are going to prove (3). For every $i,j\geq 1$ with  $p^j i\geq k$, one has
\[
\U(\sA)_i^{p^j}\subseteq \U_{n,i}(\sA)^{p^j}\subseteq \U_{n,ip^j}(\sA)\subseteq \U_{n,k}(\sA).
\]
Hence 
\[
\U(\sA)_{(k)}=\prod_{p^j i\geq k} \U(\sA)_i^{p^j}\subseteq \U_{n,k}(\sA).
\qedhere
\]
\end{proof}

\begin{lem}
\label{lem: trivial on Zassenhaus subgroups}
 Let $G$ be a pro-$p$-group. 
\begin{enumerate}
 \item Every continuous homomorphism $\rho\colon G\to \overline{\U}(\sA)$ is trivial on $G_{(n)}$.
\item Every continuous homomorphism $\rho\colon G\to \U(\sA)$ is trivial on $G_{(n+1)}$.
\end{enumerate}
\end{lem}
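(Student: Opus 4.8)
The plan is to reduce both statements to the structural facts about $\U(\sA)$ already established in Lemma~\ref{lem:2.3}(3), together with the elementary fact that the Zassenhaus filtration is functorial for continuous homomorphisms of pro-$p$ groups. First I would record this functoriality: if $\rho\colon G\to H$ is a continuous homomorphism of pro-$p$ groups, then $\rho(G_{(k)})\subseteq H_{(k)}$ for every $k\geq 1$. This is immediate by induction on $k$ from the defining formula \eqref{zassenhaus defi}, since $\rho$ carries $p$-th powers to $p$-th powers and commutators $[x,y]$ to $[\rho(x),\rho(y)]$, and since $\rho$ is continuous so that images of closed subgroups generated by such elements land in the corresponding closed subgroups of $H$.

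For part (2), apply this with $H=\U(\sA)$: we get $\rho(G_{(n+1)})\subseteq \U(\sA)_{(n+1)}$, and by Lemma~\ref{lem:2.3}(3) we have $\U(\sA)_{(n+1)}=1$, so $\rho$ is trivial on $G_{(n+1)}$. For part (1), apply the functoriality to $\rho\colon G\to \overline{\U}(\sA)$ to get $\rho(G_{(n)})\subseteq \overline{\U}(\sA)_{(n)}$, and again Lemma~\ref{lem:2.3}(3) gives $\overline{\U}(\sA)_{(n)}=1$. Alternatively one can deduce (1) from (2) directly: a homomorphism $\rho\colon G\to\overline{\U}(\sA)=\U(\sA)/Z(\sA)$ need not lift to $\U(\sA)$, so the cleanest route really is the functoriality argument rather than a lifting argument; but if one prefers, one notes $\overline\U(\sA)\cong \U_{n,1}(\sA)/\U_{n,n}(\sA)$ is itself of the form "$\U$ of a rank $n-1$ system" modulo a recentering, though making that precise is more trouble than it is worth here.

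The only point requiring a word of care — and the closest thing to an obstacle — is the passage from the algebraic identities in \eqref{zassenhaus defi}, which are about subgroups \emph{generated} by powers and commutators, to the \emph{closed} subgroups that actually appear in the profinite setting: one must check that a continuous homomorphism sends the closure of a subgroup into the closure of the image, which holds because continuous homomorphisms of profinite groups have closed image and preserve closures of subsets. Everything else is a routine induction, so I would keep the write-up short: state functoriality as a one-line observation, then quote Lemma~\ref{lem:2.3}(3) twice.
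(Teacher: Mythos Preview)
Your proposal is correct and is exactly the approach the paper takes: the paper's proof is the single sentence ``The proof follows from Lemma~\ref{lem:2.3}(3)'', and your write-up simply makes explicit the functoriality $\rho(G_{(k)})\subseteq H_{(k)}$ that is being used tacitly. Your digression about deducing (1) from (2) via a lifting is unnecessary (and, as you note yourself, not the clean route), so in the final write-up just keep the functoriality observation and the two invocations of Lemma~\ref{lem:2.3}(3).
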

\begin{proof} The proof follows from Lemma~\ref{lem:2.3} (3).
\end{proof}

\begin{rmk}
\label{rmk:2.5}
From the above lemma, we see that for every pro-$p$ group $G$,
\[
G_{(n+1)} \subseteq \bigcap \ker(\rho \colon G \to \U(\mathcal{A})),
\]
where the intersection is taken as $\mathcal{A}$ runs over the collection of all rank $n$ multiplicative systems with $A_{1,n+1} = \mathbb{F}_p$ and $\rho$ runs over all homomorphisms $G \to \U(\mathcal{A})$. Furthermore, we shall show that the equality holds if either $G$ is free or $n = 1$.

Suppose $G$ is free. From Theorem~\ref{thm:Efrat} we see that
\[
\bigcap \ker(\rho \colon G \to \U(\mathcal{A})) \subseteq \bigcap \ker(\rho \colon G \to \U_{n+1}(\mathbb{F}_p)) = G_{(n+1)}.
\]
Hence the equality holds.

Now we suppose that $n = 1$. Let $\sigma \in G \setminus G_{(2)}$. Then $\bar{\sigma} \in G/G_{(2)}$ is nonzero. Let $\mathcal{B}$ be a basis for $\mathbb{F}_p$-vector space $G/G_{(2)}$ that contains $\bar{\sigma}$. Let $f \colon G/G_{(2)} \to \mathbb{F}_p$ be the linear mapping such that $f(\bar{\sigma}) = 1$ and $f(x) = 0$ for every $x \in \mathcal{B} \setminus \{\bar{\sigma}\}$. We define a homomorphism $\bar{\rho} \colon G/G_{(2)} \to \U_2(\mathbb{F}_p)$ by
\[
\bar{\rho}(\bar{\tau}) = \begin{bmatrix} 1 & f(\bar{\tau}) \\ 0 & 1 \end{bmatrix}, \quad \text{ for all }  \bar{\tau} \in G/G_{(2)}.
\]
Let $\rho \colon G \to \U_2(\mathbb{F}_p)$ be the composition of $\bar{\rho}$ and the natural homomorphism $G \to G/G_{(2)}$. Then
\[
\rho(\sigma) = \begin{bmatrix} 1 & 1 \\ 0 & 1 \end{bmatrix}.
\]
Hence $\sigma \notin \ker(\rho)$. The above argument implies that
\[
\bigcap \ker(\rho \colon G \to \U(\mathcal{A})) \subseteq \bigcap \ker(\rho \colon G \to \U_2(\mathbb{F}_p)) \subseteq G_{(2)},
\]
and the equality holds.
\end{rmk}

For a representation $\rho\colon G\to \U(\sA)$ and $1\leq i\leq j\leq n+1$,
let $\rho_{ij}\colon G\to A_{i,j}$
be the composition of $\rho$ with the projection from $\U(\sA)$ to its $(i,j)$-coordinate.
We use a similar notation for representations $\bar\rho\colon G\to\overline{\U}(\sA)$.
Note that $\rho_{i,i+1}$ (resp., $\bar\rho_{i,i+1}$) is a group homomorphism.

Suppose that $n\geq 2$. Let $\sA=\{A_{ij}\}$ be a rank $n$ multiplicative system (of finite dimensional $\F_p$-vector spaces). Let $G$ be a pro-$p$ group which acts trivially on each $A_{ij}$. We choose classes $\alpha_i\in H^1(G,A_{i,i+1})$, and suppose that there is an array $M$ of cochains
\[
M=\{a_{ij}\mid 1\leq i<j\leq n+1, (i,j)\not=(1,n+1), a_{ij}\in C^1(G,A_{ij})\} 
\]
such that the following conditions are fulfilled: 
\begin{enumerate}
\item $a_{i,i+1}\in Z^1(G,A_{i,i+1})$ represents $\alpha_i$,
\item  $\partial a_{ij}=\sum_{k=i+1}^{j-1} a_{ik}\cup a_{kj}$ for $i+1<j$.
\end{enumerate}
Such an $M$ is called a {\it defining system} for the (generalized) $n$th Massey product $\langle \alpha_1,\ldots,\alpha_n\rangle^{\sA}$. For a defining system $M=\{a_{ij}\}$, one can check that $\sum_{k=2}^n a_{1,k}\cup a_{k,n+1}$ is a 2-cocycle in $C^2(G,A_{1,n+1})$. 
 The cohomology class in $H^2(G,A_{1,n+1})$ of this 2-cocycle is called the {\it value} of the Massey product relative to $M$, and is denoted by $\langle \alpha_1,\ldots,\alpha_n\rangle^{\sA}_M$.

\begin{thm}[{\cite[Theorem 2.6]{Dwy}}]
\label{thm:Dwyer}
Suppose $\alpha_i \in H^1(G,A_{i,i+1})$, $1\leq i\leq n$. There is a one-one correspondence $M\leftrightarrow \bar\rho_M$ between defining systems $M$ for $\langle \alpha_1,\ldots,\alpha_n\rangle^\sA$ and group homomorphism $\bar\rho_M:G\to \overline{\U}(\sA)$ with $(\bar\rho_M)_{i,i+1}= -\alpha_i$, for $1\leq i\leq n$.

Moreover $\langle \alpha_1,\ldots,\alpha_n\rangle_M^\sA=0$ in $H^2(G,A_{1,n+1})$ if and only if the dotted arrow exists in the following  commutative diagram
\[
\xymatrix{
& & &G \ar@{->}[d]^{\bar\rho_M} \ar@{-->}[ld]\\
0\ar[r]& A_{1,n+1}\ar[r] &\U(\sA)\ar[r] &\overline{\U}(\sA)\ar[r] &1.
}
\]
\end{thm}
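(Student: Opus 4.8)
The plan is to translate both sides of the correspondence into explicit cochain data and then apply the obstruction theory for central extensions. Write elements of $\U(\sA)$ as $1+a$ with $a=\{a_{ij}\}\in V_{n,1}(\sA)$. Since $V_{n,n}(\sA)$ is supported in the single slot $(1,n+1)$, the kernel $Z(\sA)=1+V_{n,n}(\sA)$ is canonically $A_{1,n+1}$, and a continuous map $\bar\rho\colon G\to\overline{\U}(\sA)$ is the same thing as a family of continuous $1$-cochains $b_{ij}\in C^1(G,A_{ij})$ for $1\le i<j\le n+1$, $(i,j)\neq(1,n+1)$. Expanding $(1+a)(1+b)=1+a+b+ab$ slot by slot, $\bar\rho$ is a homomorphism exactly when
\[
b_{ij}(gh)=b_{ij}(g)+b_{ij}(h)+\sum_{k=i+1}^{j-1}b_{ik}(g)\,b_{kj}(h)\qquad\text{for all }(i,j)\neq(1,n+1);
\]
for $j=i+1$ this is just the cocycle condition on $b_{i,i+1}$.

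First I would set up the bijection. Given a defining system $M=\{a_{ij}\}$, put $(\bar\rho_M)_{ij}:=-a_{ij}$. The quadratic term $ab$ in the group law is unchanged under simultaneously negating both factors, so the homomorphism identity above for $\{-a_{ij}\}$ is, slot by slot, precisely the two defining conditions on $M$: for $j=i+1$ it says $a_{i,i+1}\in Z^1(G,A_{i,i+1})$, and for $j>i+1$ it rearranges to $\partial a_{ij}=\sum_{k=i+1}^{j-1}a_{ik}\cup a_{kj}$ (using $(\partial a)(g,h)=a(g)-a(gh)+a(h)$ and $(a\cup b)(g,h)=a(g)b(h)$ for the trivial $G$-action). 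In particular $(\bar\rho_M)_{i,i+1}=-a_{i,i+1}$ represents $-\alpha_i$. Conversely, negating the components of any homomorphism $\bar\rho$ with $(\bar\rho)_{i,i+1}$ representing $-\alpha_i$ produces a defining system for $\langle\alpha_1,\dots,\alpha_n\rangle^{\sA}$; continuity is preserved in both directions, so $M\mapsto\bar\rho_M$ is the desired one-one correspondence.

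Next I would prove the lifting criterion. The sequence $0\to A_{1,n+1}\to\U(\sA)\to\overline{\U}(\sA)\to 1$ is a central extension on which $G$ acts trivially, so by the standard theory $\bar\rho_M$ lifts to a continuous homomorphism $\rho\colon G\to\U(\sA)$ if and only if the pulled-back central extension of $G$ by $A_{1,n+1}$ splits, i.e. has trivial class in $H^2(G,A_{1,n+1})$. To identify that class, I would use the truncation section $s\colon\overline{\U}(\sA)\to\U(\sA)$ that puts $0$ in the $(1,n+1)$-slot. A short computation with the group law---using that the product of an element of $V_{n,1}(\sA)$ and an element of $V_{n,n}(\sA)$ vanishes---gives $s(\bar u)s(\bar v)\,s(\bar u\bar v)^{-1}=1+e$, where $e\in V_{n,n}(\sA)$ has $(1,n+1)$-entry $\sum_{k=2}^{n}(\bar u)_{1k}(\bar v)_{k,n+1}$. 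Hence the pullback class is represented by
\[
(g,h)\longmapsto\sum_{k=2}^{n}(\bar\rho_M)_{1k}(g)\,(\bar\rho_M)_{k,n+1}(h)=\sum_{k=2}^{n}a_{1k}(g)\,a_{k,n+1}(h),
\]
where the two sign changes cancel; this is exactly the cocycle $\sum_{k=2}^{n}a_{1,k}\cup a_{k,n+1}$ defining $\langle\alpha_1,\dots,\alpha_n\rangle^{\sA}_M$. Therefore $\bar\rho_M$ lifts to $\U(\sA)$---equivalently, the dotted arrow exists---precisely when $\langle\alpha_1,\dots,\alpha_n\rangle^{\sA}_M=0$.

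The genuinely routine part is the componentwise bookkeeping with the group law of $\U(\sA)$. The points that demand care are the sign conventions: they are what force the normalization $(\bar\rho_M)_{i,i+1}=-\alpha_i$, and they must be kept compatible between the chosen coboundary/cup-product conventions and the group operation $(1+a)(1+b)=1+a+b+ab$. I would also make sure the obstruction-theory argument is the continuous one, but since the truncation section $s$ is continuous (indeed all groups here are finite) the profinite version of the classification of central extensions applies verbatim. The single step most worth double-checking is the last one---that the obstruction cocycle equals the Massey cocycle on the nose rather than merely up to a coboundary---and as sketched it does, with no correction term.
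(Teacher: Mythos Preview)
The paper does not give its own proof of this theorem; it is stated with a citation to \cite{Dwy}, and immediately afterwards the paper only records the explicit form of the bijection, $(\bar\rho_M)_{ij}=-a_{ij}$, which is precisely what you derive. Your argument correctly supplies the details that the paper omits: the slot-by-slot translation of the group law $(1+a)(1+b)=1+a+b+ab$ into the defining-system identities, and the identification of the obstruction cocycle of the central extension via the truncation section $s$, are both carried out accurately, including the cancellation of the two minus signs in the $(1,n+1)$-slot. There is nothing substantive to compare---you have written out a proof where the paper defers to the literature, and your approach is the standard one.
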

Explicitly, the one-one correspondence in Theorem~\ref{thm:Dwyer} is given as follows: For a defining system $M=(a_{ij})$ for $\langle \alpha_1,\ldots,\alpha_n\rangle$, $\rho_M:G\to \overline{\U}_{n+1}(A)$ is given by letting $(\bar\rho_M)_{ij}=-a_{ij}$.

As in \cite[Remark, p. 182]{Dwy}, one has the following result.
\begin{prop}
\label{prop:Dwyer}
Let $\bar\rho_M\colon G\to \overline{\U}(\sA)$ correspond to a defining system $M=(c_{ij})$ for $\langle \alpha_1,\ldots,\alpha_n\rangle^\sA$ as in Theorem~\ref{thm:Dwyer}. Then the central extension associated with $\langle \alpha_1,\ldots,\alpha_n\rangle^\sA_M$ is the pull back
\[
0\to A_{1,n+1} \to \U(\sA)\times_{\overline{\U}(\sA)} G \to G \to 1
\]
via $\bar\rho_M\colon G\to \overline{\U}(\sA)$ of the extension
\[0\to A_{1,n+1} \to \U(\sA)\to \bar \U(\sA) \to 1.\]
\end{prop}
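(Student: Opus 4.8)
\emph{Proof plan.} The plan is to recognize both sides as central extensions of $G$ by $A_{1,n+1}$ and to verify that they define the same class in $H^2(G,A_{1,n+1})$; since continuous central extensions of a profinite group $G$ by a finite trivial module are classified up to equivalence by $H^2(G,-)$, this gives the asserted identification. That the pullback $\U(\sA)\times_{\overline{\U}(\sA)}G$ is such a central extension is immediate from the pullback formalism together with the fact, recorded in Section~2, that $A_{1,n+1}=\ker(\U(\sA)\to\overline{\U}(\sA))=Z(\sA)$ is central in $\U(\sA)$. So it remains to compute the class of this pullback and to match it with $\langle\alpha_1,\ldots,\alpha_n\rangle^{\sA}_M$.

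First I would pin down the extension class of $0\to A_{1,n+1}\to\U(\sA)\xrightarrow{\pi}\overline{\U}(\sA)\to1$ by choosing an explicit set-theoretic section. Because $V_{n,n}(\sA)$ consists exactly of those arrays whose only possibly nonzero entry lies in the slot $(1,n+1)$, two elements of $\U(\sA)$ have the same image under $\pi$ iff they agree off that slot; hence the map $s$ sending $1+\bar a$ to the unique lift $1+a$ with $a_{1,n+1}=0$ is a well-defined (continuous) section. For $1+a=s(1+\bar a)$ and $1+b=s(1+\bar b)$ the product in $\U(\sA)$ is $1+a+b+ab$, whose $(1,n+1)$-coordinate is $\sum_{k=2}^{n}a_{1,k}b_{k,n+1}$; since $V_{n,n}(\sA)\cdot V_{n,1}(\sA)=V_{n,1}(\sA)\cdot V_{n,n}(\sA)=V_{n,n+1}(\sA)=0$, the correction term $1+\bigl(\sum_{k=2}^{n}a_{1,k}b_{k,n+1}\bigr)$ is central and one reads off $s(1+\bar a)\,s(1+\bar b)\,s\bigl((1+\bar a)(1+\bar b)\bigr)^{-1}=1+\sum_{k=2}^{n}a_{1,k}b_{k,n+1}$. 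Thus this extension is represented by the $2$-cocycle $\gamma(1+\bar a,1+\bar b)=\sum_{k=2}^{n}a_{1,k}b_{k,n+1}\in A_{1,n+1}$.

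Next I would pull $\gamma$ back along $\bar\rho_M$. Using the section $g\mapsto(s(\bar\rho_M(g)),g)$ of the pullback and the explicit description $(\bar\rho_M)_{ij}=-c_{ij}$ from Theorem~\ref{thm:Dwyer}, the pullback extension is represented by the $2$-cocycle
\[
(g,h)\longmapsto \gamma\bigl(\bar\rho_M(g),\bar\rho_M(h)\bigr)=\sum_{k=2}^{n}\bigl(-c_{1,k}(g)\bigr)\bigl(-c_{k,n+1}(h)\bigr)=\sum_{k=2}^{n}c_{1,k}(g)\,c_{k,n+1}(h),
\]
the products being taken with the pairings $\mu_{1,k,n+1}$. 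Since $G$ acts trivially on every $A_{ij}$, this is precisely $\bigl(\sum_{k=2}^{n}c_{1,k}\cup c_{k,n+1}\bigr)(g,h)$, the $2$-cocycle whose cohomology class is by definition $\langle\alpha_1,\ldots,\alpha_n\rangle^{\sA}_M$. Comparing classes in $H^2(G,A_{1,n+1})$ would then complete the proof.

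The only place demanding attention, as opposed to genuine difficulty, is the sign and cup-product bookkeeping: the choice $(\bar\rho_M)_{ij}=-c_{ij}$ is exactly what makes the two minus signs cancel and yields $+\sum_k c_{1,k}\cup c_{k,n+1}$ rather than its negative, and one should keep straight the convention under which $\sum_k c_{1,k}\cup c_{k,n+1}$ is a cocycle at all — but the latter is guaranteed by the defining-system relations $\partial a_{ij}=\sum_k a_{ik}\cup a_{kj}$ and is already noted in Section~2.
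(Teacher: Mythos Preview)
Your argument is correct. The paper does not actually prove this proposition; it simply records it with the attribution ``As in \cite[Remark, p.~182]{Dwy}, one has the following result'' and states it without further justification. Your proof supplies the missing details via the standard route: choosing the section $s(1+\bar a)=1+a$ with $a_{1,n+1}=0$, reading off the universal $2$-cocycle $\gamma(1+\bar a,1+\bar b)=\sum_{k=2}^{n}a_{1,k}b_{k,n+1}$, pulling back along $\bar\rho_M$, and using $(\bar\rho_M)_{ij}=-c_{ij}$ so that the two signs cancel and one lands exactly on the cocycle $\sum_{k}c_{1,k}\cup c_{k,n+1}$ defining $\langle\alpha_1,\ldots,\alpha_n\rangle^{\sA}_M$.

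Two minor points of phrasing, not of substance. First, when you write $V_{n,n}(\sA)\cdot V_{n,1}(\sA)=V_{n,1}(\sA)\cdot V_{n,n}(\sA)=V_{n,n+1}(\sA)=0$, strictly speaking the first two equalities are inclusions into $V_{n,n+1}(\sA)=0$; this is what the paper records as $V_{n,d}(\sA)V_{n,d'}(\sA)\subseteq V_{n,d+d'}(\sA)$. Second, your appeal to the classification of continuous central extensions by $H^2$ is unproblematic here because each $A_{ij}$ is a finite-dimensional $\F_p$-vector space, so $\U(\sA)$ is a finite $p$-group and all maps and sections are automatically continuous.
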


Let $A$ be a finite-dimensional $\F_p$-vector space. 
A class $\alpha\in H^2(G,A)$ is said to be {\it decomposable} if there is some rank $n$ multiplicative system $\sA$ of $\F_p$-vector spaces  with $A_{1,n+1}=A$,  some choice of classes $\alpha_i\in H^1(G,A_{i,i+1})$ and some defining system $M$ for $\langle \alpha_1,\ldots,\alpha_n\rangle^{\sA}$ such that $\alpha= \langle \alpha_1,\ldots,\alpha_n\rangle^{\sA}_M$.
The set $\Phi^n(G,A)$ is defined as the subset of $H^2(G,A)$ consisting of all such decomposable classes.

\begin{lem}
$\Phi^n(G,A)$ is a subgroup of $H^2(G,A)$.
\end{lem}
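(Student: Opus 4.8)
The plan is to show $\Phi^n(G,A)$ is closed under negation and under addition, using the block-sum construction on multiplicative systems. First I would treat negation: if $\alpha = \langle \alpha_1,\dots,\alpha_n\rangle^{\sA}_M$ with defining system $M = (a_{ij})$, then replacing $M$ by $M' = ((-1)^{?}a_{ij})$ with appropriate signs (explicitly, negating $a_{ij}$ whenever $j-i$ is even, say, or more cleanly rescaling the $(i,j)$-entry by $(-1)^{j-i-1}$) gives a new defining system whose value is $-\alpha$; one checks directly that conditions (1) and (2) for a defining system are preserved under this sign-twist and that $\sum_{k} a'_{1,k}\cup a'_{k,n+1}$ picks up an overall sign $-1$. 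Alternatively, and perhaps more robustly, one can use the multiplicative system $\sA$ with the pairing $\mu_{ijk}$ replaced by $-\mu_{ijk}$ (which is still associative), keeping the same cochains; this again flips the sign of the value. Either way, $-\alpha \in \Phi^n(G,A)$.

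The main step is closure under addition. Suppose $\alpha = \langle \alpha_1,\dots,\alpha_n\rangle^{\sA}_M$ and $\beta = \langle \beta_1,\dots,\beta_n\rangle^{\sB}_N$, with $A_{1,n+1} = B_{1,n+1} = A$. I would build a new rank $n$ multiplicative system $\sC$ as follows: for $1 \le i < j \le n+1$ with $(i,j) \ne (1,n+1)$ set $C_{ij} = A_{ij} \oplus B_{ij}$, and set $C_{1,n+1} = A$ (not the direct sum). The pairings $\mu^{\sC}_{ijk}$ are the obvious block-diagonal maps $(A_{ij}\oplus B_{ij})\otimes(A_{jk}\oplus B_{jk}) \to A_{ik}\oplus B_{ik}$ for $(i,k)\neq(1,n+1)$, and for $(i,k) = (1,n+1)$ the pairing $C_{1j}\otimes C_{j,n+1} \to A$ is the sum of $\mu^{\sA}$ on the $A$-summands and $\mu^{\sB}$ on the $B$-summands (i.e. $(x,x')\otimes(y,y') \mapsto xy + x'y'$). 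Associativity of $\mu^{\sC}$ follows from associativity of $\mu^{\sA}$ and $\mu^{\sB}$ together with the fact that a triple product landing in $C_{1,n+1}$ only ever contracts an $A$-factor with an $A$-factor or a $B$-factor with a $B$-factor. Now take $\gamma_i = (\alpha_i,\beta_i) \in H^1(G,C_{i,i+1})$ and the defining system $P = (a_{ij}\oplus b_{ij})$ (with cochains $(a_{ij}, b_{ij})$); conditions (1) and (2) hold componentwise, and the value is $\sum_k (a_{1k},b_{1k})\cup(a_{k,n+1},b_{k,n+1}) = \sum_k a_{1k}\cup a_{k,n+1} + \sum_k b_{1k}\cup b_{k,n+1}$, a cocycle representing $\alpha+\beta$. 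Hence $\alpha + \beta \in \Phi^n(G,A)$.

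The last thing to check is that $\Phi^n(G,A)$ is nonempty, equivalently contains $0$: take any $\sA$ (for instance all $A_{ij} = \F_p$ for $i<j<n+1$, $A_{1,n+1}=A$, with zero pairings into $A_{1,n+1}$), all $\alpha_i = 0$, and the zero defining system; its value is $0$. Combined with the above, $\Phi^n(G,A)$ is a subgroup.

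I expect the main obstacle to be purely bookkeeping: verifying associativity of the pairing $\mu^{\sC}$ in the case where the target is $C_{1,n+1} = A$, since there the direct-sum structure is broken and one must confirm the two associativity pentagons (the one with $1\otimes\mu$ and the one with $\mu\otimes 1$) still agree after projecting the inner contraction to the $A$- or $B$-summand appropriately. No genuinely new idea is needed beyond the block-diagonal construction; the sign issue in the negation step is the only other place where care is required, and it is resolved by the sign-twisted defining system (or sign-twisted pairing) described above.
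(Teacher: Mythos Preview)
Your argument for closure under addition is correct and is exactly the paper's argument: one takes the block-diagonal multiplicative system $\sC$ with $C_{ij}=A_{ij}\oplus B_{ij}$ for $(i,j)\neq(1,n+1)$ and $C_{1,n+1}=A$, and checks that the direct-sum defining system has value $\alpha+\beta$. The associativity bookkeeping you flagged as the main obstacle goes through without difficulty.

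The negation step, however, does not work as you wrote it. Take your second suggestion: replace every pairing $\mu_{ijk}$ by $-\mu_{ijk}$ and keep the same cochains $a_{ij}$. Associativity is fine, but condition~(2) for a defining system now reads
\[
\partial a_{ij}=\sum_{k} a_{ik}\cup_{-\mu} a_{kj}=-\sum_k a_{ik}\cup_\mu a_{kj},
\]
which contradicts the original relation $\partial a_{ij}=+\sum_k a_{ik}\cup_\mu a_{kj}$ whenever the right-hand side is nonzero. So the same cochains are \emph{not} a defining system for the sign-twisted pairing. Your first suggestion, rescaling $a_{ij}$ by $(-1)^{j-i-1}$, has the same defect: one computes $\partial a'_{ij}=(-1)^{j-i-1}\partial a_{ij}$ while $\sum_k a'_{ik}\cup a'_{kj}=(-1)^{j-i-2}\sum_k a_{ik}\cup a_{kj}$, off by a sign. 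A correct sign-twist does exist (negate only those pairings whose target is $A_{1,n+1}$), but it is not the one you described.

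The paper avoids this issue altogether: since $A$ is an $\F_p$-vector space, $H^2(G,A)$ is killed by $p$, so $-\alpha=(p-1)\alpha$ lies in $\Phi^n(G,A)$ already by closure under addition. This is both shorter and cleaner than any sign manipulation. With that replacement your proof is complete and coincides with the paper's.
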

\begin{proof}  Let $\alpha$ and $\alpha^\prime$ be two elements in $\Phi^n(G,A)$. Then $\alpha= \langle \alpha_1,\ldots,\alpha_n\rangle^{\sA}_M$ for some rank $n$ multiplicative system $\sA=\{A_{ij}\}$ with $A_{1,n+1}=A$ with pairings $\mu_{ijk}\colon A_{ij}\otimes A_{jk}\to A_{ik}$, some choice of classes $\alpha_i\in H^1(G,A_{i,i+1})$ and some defining system $M=\{a_{ij}\}$ for $\langle \alpha_1,\ldots,\alpha_n\rangle^{\sA}$.
 
Similarly, $\alpha^\prime= \langle \alpha^\prime_1,\ldots,\alpha_n^\prime\rangle^{\sA'}_{M^\prime}$ for some rank $n$ multiplicative system $\sA^\prime=\{A^\prime_{ij}\}$ with $A^\prime_{1,n+1}=A$ with pairings $\mu'_{ijk}\colon A'_{ij}\otimes A'_{jk}\to A'_{ik}$, some choice of classes $\alpha^\prime_i\in H^1(G,A^\prime_{i,i+1})$ and some defining system $M^\prime=\{a^\prime_{ij}\}$ for $\langle \alpha^\prime_1,\ldots,\alpha_n^\prime\rangle^{\sA^\prime}$. 

We set $B_{1,n+1}=A$, and for  other pairs $(i,j)$ we set $B_{ij}=A_{ij}\oplus A_{ij}^\prime$. We define $\nu_{ijk}\colon B_{ij}\otimes B_{jk}\to B_{ik}$ by
\[
\nu_{ijk}((a+b)\otimes(c+d))= \mu_{ijk}(a\otimes c)+\mu_{ijk}^\prime(b\otimes d).
\] 
We set $\beta_{i}=\alpha_i+\alpha_i^\prime$ and $b_{ij}=a_{ij}+a_{ij}^\prime\in C^1(G,B_{ij})$. Then  $\sB=\{B_{i,j}\}$ is a multiplicative system of rank $n$, and $N=\{b_{ij}\}$ is a defining system for $\langle \beta_1,\ldots,\beta_n\rangle^{\sB}$ and $\alpha+\alpha^\prime=\langle \beta_1,\ldots,\beta_n\rangle^{\sB}_N$.

Since $A$ is an $\F_p$-vector space, $H^2(G,A)$ is killed by $p$. Hence $-\alpha=(p-1)\alpha$ is  in $\Phi^n(G,A)$.
So $\Phi^n(G,A)$  is also closed under taking (additive) inverse. Therefore, $\Phi^n(G,A)$ is a subgroup of $H^2(G,A)$.
\end{proof}

\section{Pairings} 
\emph{In this section we suppose that $n$ is an integer $\geq 2$, except  in Theorem~\ref{thm:main}.}

Let $\sA$ be a multiplicative system of rank $n$ with $A_{1,n+1}=\F_p$. Denote $H^i(G)=H^i(G,\F_p)$ and $\Phi^n(G)=\Phi^n(G,\F_p)$. We always assume that the action of $G$ on $\F_p$ is
trivial. Observe that in our main applications $G$ is a pro-$p$ group and any pro-p group
can act on $\F_p$ only trivially.

Consider the exact sequence of profinite groups 
\[
1\to N\to G\to G/N\to 1.
\]
One has the 5-term exact sequence for the lower cohomology groups with coefficients in $\F_p$
\[
0\to H^1(G/N) \stackrel{{\rm inf}}{\to} H^1(G)\stackrel{{\rm res}}{\to} H^1(N)^{G/N}\stackrel{{\rm trg}}{\to} H^2(G/N)\stackrel{{\rm inf}}{\to} H^2(G).
\]

Let $\alpha$ be any element in $\Phi^n(G/N)$. Then $\alpha = \langle \alpha_1, \ldots, \alpha_n \rangle^{\mathcal{A}}_M$ for some multiplicative system $\mathcal{A} = \{A_{ij}\}$, some classes $\alpha_i \in Z^1(G/N, A_{i,i+1})$ and some defining system $M = \{a_{ij}\}$. Let $b_{ij}$ be the composition of the natural map $G \to G/N$ and $a_{ij}$. Let $\beta_i$ be the composition of the natural map $G \to G/N$ and $\alpha_i$. Then $M' = \{b_{ij}\}$ is a defining system for the Massey product $\langle \beta_1, \ldots, \beta_n \rangle^{\mathcal{A}}$, and $\beta := \langle \beta_1, \ldots, \beta_n \rangle^{\mathcal{A}}_{M'} = \inf(\alpha)$. Hence the inflation induces a homomorphism $\Phi^n(G/N) \to \Phi^n(G)$. Clearly, 
\[\ker(\Phi^n(G/N) \to \Phi^n(G)) = \ker(\Phi^n(G/N) \to H^2(G)).\]

Now assume that $N\leq G^p[G,G]=G_{(2)}$. Then the inflation map $H^1(G/N) \stackrel{{\rm inf}}{\to} H^1(G)$ is surjective. This is because each homomorphism $\varphi \colon G \to \F_p$ vanishes on $G^p[G, G]$ and hence also on $N$.  Hence 
\[
{\rm trg}\colon H^1(N)^{G/N}\to \ker(H^2(G/N)\stackrel{{\rm inf}}{\to} H^2(G))
\]
is an isomorphism. 


We assume further that $N\leq G_{(n)}$. Observe that $G/N$ acts trivially on $H^1(N/N \cap G_{
(n+1)})$ because for each $g$ in $G$ and $h$ in $N$, $g^{-1}h^{-1}gh$ belongs to $G_{(n+1)}$.
By applying the above discussion to the short exact sequence
\[
1\to N/N\cap G_{(n+1)}\to G/N\cap G_{(n+1)}\to G/N\to 1,
\] 
we obtain another isomorphism, still denoted by ${\rm trg}$:
\[
{\rm trg}\colon H^1(N/N \cap G_{
(n+1)})\to \ker(H^2(G/N)\stackrel{{\rm inf}}{\to} H^2(G/N \cap G_{
(n+1)})).
\]
We then can define a pairing
\[ \langle \cdot,\cdot\rangle \colon N/N\cap G_{(n+1)} \times \ker (\Phi^n(G/N)\to \Phi^n(G/N\cap G_{(n+1)}))\to \F_p
\]
by $\langle \bar{\sigma},\alpha\rangle=({\rm trg}^{-1}(\alpha))(\bar{\sigma})$, for $\bar{\sigma}\in N/N\cap G_{(n+1)}$, $\alpha\in  \ker (\Phi^n(G/N)\to \Phi^n(G/N\cap G_{(n+1)})) $. 
From the definition, we see that $\langle \bar{\sigma},\alpha\rangle$ is additive with respect to both $\alpha$ and $\bar{\sigma}$.
 Also $N/(N \cap G_{(n+1)})$ is an elementary abelian $p$-group, that means $N/(N \cap G_{(n+1)})$ is an $\mathbb{F}_p$-vector space.

\begin{lem}
\label{lem:3.1}
One has
\[
\begin{aligned}
\ker(\Phi^n(G/N) \to \Phi^n(G/N \cap G_{(n+1)})) &= \ker(\Phi^n(G/N) \to H^2(G/N \cap G_{(n+1)}))\\& = \ker(\Phi^n(G/N) \to H^2(G)).
\end{aligned}
\]
\end{lem}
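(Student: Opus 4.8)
The plan is to establish the two claimed equalities in turn, exploiting the filtration $N \cap G_{(n+1)} \subseteq N \subseteq G$ and the functoriality of the decomposable-class construction already recorded in Section~3. Throughout write $Q = G/N$, $Q' = G/N\cap G_{(n+1)}$, so that $Q'$ surjects onto $Q$ with kernel $N/N\cap G_{(n+1)}$, which by the discussion preceding the lemma is an $\F_p$-vector space contained in $Q'_{(n)}$.

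For the first equality, I would argue as follows. The inclusion $\ker(\Phi^n(Q) \to \Phi^n(Q')) \supseteq \ker(\Phi^n(Q)\to H^2(Q'))$ is the easy direction: if $\alpha \in \Phi^n(Q)$ maps to $0$ in $H^2(Q')$, then a fortiori its image in $\Phi^n(Q') \subseteq H^2(Q')$ is $0$. For the reverse inclusion, suppose $\alpha = \langle\alpha_1,\dots,\alpha_n\rangle^\sA_M \in \Phi^n(Q)$ has inflation in $\Phi^n(Q')$ equal to $0$; by the paragraph before the lemma its inflation is $\langle\beta_1,\dots,\beta_n\rangle^\sA_{M'}$ where $M'$ is the pulled-back defining system, and this class already lies in $H^2(Q')$; since it is the zero element of $\Phi^n(Q')$ it is in particular the zero element of $H^2(Q')$. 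Thus the two kernels coincide. (In fact this step is formal: for any quotient map $Q \to Q''$ the kernel of $\Phi^n(Q)\to\Phi^n(Q'')$ equals the kernel of $\Phi^n(Q)\to H^2(Q'')$, exactly as was observed in the text for the map $\Phi^n(G/N)\to\Phi^n(G)$; apply this with $Q'' = Q'$.)

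For the second equality I must show $\ker(\Phi^n(Q)\to H^2(Q')) = \ker(\Phi^n(Q)\to H^2(G))$. One inclusion is immediate from the commuting triangle $Q \leftarrow Q' \leftarrow G$ of quotient maps and the induced triangle on $H^2$: a class killed by inflation to $H^2(G)$ is killed by inflation to $H^2(Q')$ only if we go the wrong way, so let me be careful. Inflation is contravariant in the group, so $H^2(Q) \to H^2(Q') \to H^2(G)$, and hence $\ker(H^2(Q)\to H^2(Q')) \subseteq \ker(H^2(Q)\to H^2(G))$, giving $\subseteq$ after intersecting with $\Phi^n(Q)$. The substantive direction is $\supseteq$: a decomposable class $\alpha \in \Phi^n(Q)$ whose inflation to $H^2(G)$ vanishes must already have vanishing inflation to $H^2(Q')$. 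Here I would use that $\alpha$ comes from a Massey product with a defining system $M = \{a_{ij}\}$ of $1$-cochains on $Q$; inflating to $Q'$ gives a defining system $M'$ on $Q'$ with value $\inf_{Q}^{Q'}(\alpha)$, and inflating further to $G$ gives the defining system $M''$ on $G$ with value $\inf_Q^G(\alpha) = 0$. By Proposition~\ref{prop:Dwyer} (or Theorem~\ref{thm:Dwyer}), $\inf_Q^{Q'}(\alpha) = 0$ iff the homomorphism $\bar\rho_{M'}\colon Q' \to \overline{\U}(\sA)$ lifts to $\U(\sA)$; and the obstruction to this lift is exactly $\inf_Q^{Q'}(\alpha) \in H^2(Q', \F_p)$. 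Now the point is that $N/N\cap G_{(n+1)} \subseteq Q'_{(n)}$, so by Lemma~\ref{lem: trivial on Zassenhaus subgroups}(1) the map $\bar\rho_{M'}$ is trivial on $N/N\cap G_{(n+1)}$, i.e. it factors through $Q$; therefore whether $\bar\rho_{M'}$ lifts to $\U(\sA)$ over $Q'$ is governed by whether the inflation of the obstruction from $Q$ survives — but a cleaner route is: the obstruction class $\inf_Q^{Q'}(\alpha) \in H^2(Q')$ pulls back to $\inf_Q^G(\alpha) = 0$ in $H^2(G)$, and the inflation $H^2(Q') \to H^2(G)$ attached to $1 \to N/N\cap G_{(n+1)} \to G \to Q' \to 1$ is injective on the relevant subgroup because $N/N\cap G_{(n+1)} \subseteq G_{(n+1)} \cdot (N\cap G_{(n+1)})/(N\cap G_{(n+1)})$... this is where care is needed.

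The main obstacle, and the place I expect to spend real effort, is precisely this last injectivity: showing that the inflation map $H^2(G/N) \to H^2(G)$ restricted to (the image of) $\Phi^n(G/N)$ has the same kernel as $H^2(G/N) \to H^2(G/N\cap G_{(n+1)})$. The natural tool is the factorization of inflation through $H^2(G/N\cap G_{(n+1)})$ together with the five-term exact sequence for $1 \to N\cap G_{(n+1)}/\text{(something)} \to G/\text{(something)} \to G/N\cap G_{(n+1)} \to 1$, using that decomposable classes are detected by homomorphisms into $\U(\sA)$ which, by Lemma~\ref{lem:2.3}(3), kill $(n+1)$st Zassenhaus terms, so that no information is lost when passing from $G$ to $G/G_{(n+1)}$ and hence the further quotient to $G/N\cap G_{(n+1)}$ changes nothing for the classes at hand. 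Concretely I would show: if $\alpha \in \Phi^n(G/N)$ and $\inf_{G/N}^{G}(\alpha) = 0$, pick the associated $\bar\rho_M \colon G/N \to \overline{\U}(\sA)$; the vanishing of $\inf_{G/N}^G(\alpha)$ means the composite $G \to G/N \xrightarrow{\bar\rho_M} \overline{\U}(\sA)$ lifts to $\U(\sA)$; but any homomorphism $G \to \U(\sA)$ kills $G_{(n+1)}$ (Lemma~\ref{lem: trivial on Zassenhaus subgroups}(2)), hence factors through $G/N\cap G_{(n+1)}$ — wait, it kills $G_{(n+1)}$, not $N\cap G_{(n+1)}$, but $N\cap G_{(n+1)} \subseteq G_{(n+1)}$, so it does factor through $G/N\cap G_{(n+1)}$ — and this factorization is a lift over $G/N\cap G_{(n+1)}$ of $\bar\rho_M$ composed with $G/N\cap G_{(n+1)} \to G/N$, which by Theorem~\ref{thm:Dwyer} exactly witnesses $\inf_{G/N}^{G/N\cap G_{(n+1)}}(\alpha) = 0$. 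The reverse implication is the trivial direction via the commuting triangle. This gives the second equality, completing the proof.
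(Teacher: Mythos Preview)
Your proposal is correct and, once the exploratory detours are stripped away, follows exactly the paper's line: the first equality is formal since $\Phi^n(Q')\hookrightarrow H^2(Q')$, and for the nontrivial inclusion in the second equality you (like the paper) take $\alpha\in\Phi^n(G/N)$ with $\inf_{G/N}^G(\alpha)=0$, lift the composite $G\to G/N\xrightarrow{\bar\rho_M}\overline{\U}(\sA)$ to $\rho\colon G\to\U(\sA)$, observe via Lemma~\ref{lem: trivial on Zassenhaus subgroups}(2) that $\rho$ kills $G_{(n+1)}\supseteq N\cap G_{(n+1)}$, and conclude that the resulting factorization through $G/N\cap G_{(n+1)}$ witnesses $\inf_{G/N}^{G/N\cap G_{(n+1)}}(\alpha)=0$. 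The only cosmetic difference is that the paper cites Hoechsmann's lemma for the lifting step where you invoke Theorem~\ref{thm:Dwyer}; these amount to the same thing here.
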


\begin{proof}
We only need to show that
\[
\ker(\Phi^n(G/N) \to H^2(G)) \subseteq \ker(\Phi^n(G/N) \to H^2(G/N \cap G_{(n+1)})).
\]
Let $\alpha$ be an arbitrary element in $\ker(\Phi^n(G/N) \to H^2(G))$. Then $\alpha = \langle \alpha_1, \ldots, \alpha_n \rangle^{\mathcal{A}}_{M'}$ for some multiplicative system $\mathcal{A}$, some classes $\alpha_i$, and some defining system $M$. Let $\bar{\rho} \colon G/N \to \overline{\U}(\mathcal{A})$ be the homomorphism corresponding to the defining system $M$. Since $\inf(\alpha) = 0 \in H^2(G)$, by Hoechsmann’s lemma (\cite[Proposition 3.5.9]{NSW}), there exists a lift $\rho \colon G \to \U(\mathcal{A})$ of $\bar{\rho}$, i.e., the following diagram
\[
\xymatrix{
G \ar@{->}[r] \ar@{->}[d]^{\rho} & G/N \ar@{->}[r] \ar@{->}[d]^{\bar{\rho}} & 1 \\
\U(\mathcal{A}) \ar@{->}[r] & \overline{\U}(\mathcal{A}) \ar@{->}[r] & 1
}
\]
is commutative.

By Lemma~\ref{lem: trivial on Zassenhaus subgroups}, $\rho$ factors through a homomorphism $\rho' \colon G/N \cap G_{(n+1)} \to \U(\mathcal{A})$. Therefore we obtain the following commutative diagram
\[
\xymatrix{
G \ar@{->}[r] \ar@{->}[d]^{\rho} & G/N \cap G_{(n+1)} \ar@{->}[r] \ar@{->}[d]^{\rho'} & G/N \ar@{->}[r] \ar@{->}[d]^{\bar{\rho}} & 1 \\
\U(\mathcal{A}) \ar@{=}[r] & \U(\mathcal{A}) \ar@{->}[r] & \overline{\U}(\mathcal{A}) \ar@{->}[r] & 1.
}
\]

Then $\rho'$ is a lift of $\bar{\rho}$. This implies that $\inf(\alpha) = 0$ in $H^2(G/N \cap G_{(n+1)})$, and we are done.
\end{proof}

The above pairing can also be described as follows. Let $\alpha=\langle \alpha_1,\ldots,\alpha_n\rangle^{\sA}_M$ and
 let $\bar\rho\colon G/N\to \overline{\U}(\sA)$ be the homomorphism corresponding to the defining system $M$. Since ${\rm inf}(\alpha)=0\in \Phi^n(G/N\cap G_{(n+1)})\subseteq H^2(G/N\cap G_{(n+1)})$, there exists a lift $\rho\colon G/N\cap G_{(n+1)}\to \U(\sA)$ of $\bar\rho$, i.e.,  the following diagram
\[
\xymatrix{
N/N\cap G_{(n+1)}\ar@{->}[r] & G/N\cap G_{(n+1)} \ar@{->}[d]^{\rho} \ar@{->}[r] &G/N \ar@{->}[d]^{\bar\rho}\ar@{->}[r] &1\\
\F_p\ar@{->}[r] &\U(\sA)\ar[r] &\overline{\U}(\sA)\ar[r] &1
}
\]
is commutative.  Then the description of $\langle \bar{\sigma},\alpha\rangle$ is given by the next lemma.
\begin{lem}
\label{lem: pairing}
 One has $\langle \bar{\sigma},\alpha\rangle= -\rho_{1,n+1}(\bar{\sigma})$.
\end{lem}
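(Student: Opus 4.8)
The plan is to compute $\langle\bar\sigma,\alpha\rangle$ by comparing the classes of two central extensions of $G/N$ by $\F_p$. Since $\langle\bar\sigma,\alpha\rangle=\bigl({\rm trg}^{-1}(\alpha)\bigr)(\bar\sigma)$ by definition and ${\rm trg}$ is an isomorphism onto $\ker\bigl(H^2(G/N)\xrightarrow{{\rm inf}}H^2(G/N\cap G_{(n+1)})\bigr)$, it is enough to identify the homomorphism ${\rm trg}^{-1}(\alpha)\in\Hom(N/N\cap G_{(n+1)},\F_p)$ with $-\rho_{1,n+1}|_{N/N\cap G_{(n+1)}}$. A first observation is that $\rho$ restricted to $N/N\cap G_{(n+1)}$ \emph{is} $\rho_{1,n+1}$ there: the composite $G/N\cap G_{(n+1)}\to G/N\xrightarrow{\bar\rho}\overline{\U}(\sA)$ kills $N/N\cap G_{(n+1)}$, so $\rho$ sends $N/N\cap G_{(n+1)}$ into $\ker\bigl(\U(\sA)\to\overline{\U}(\sA)\bigr)=Z(\sA)=1+V_{n,n}(\sA)$, which, as $V_{n,n}(\sA)$ is supported in the $(1,n+1)$-entry, is canonically $A_{1,n+1}=\F_p$; under this identification $\rho|_{N/N\cap G_{(n+1)}}=\rho_{1,n+1}|_{N/N\cap G_{(n+1)}}=:\varphi$, a homomorphism, automatically $G/N$-invariant since $G/N$ acts trivially on $N/N\cap G_{(n+1)}$.

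Next I would show that $\alpha=\varphi_*(\epsilon)$, where $\epsilon\in H^2\bigl(G/N,\,N/N\cap G_{(n+1)}\bigr)$ denotes the class of the extension $1\to N/N\cap G_{(n+1)}\to G/N\cap G_{(n+1)}\to G/N\to 1$. By Proposition~\ref{prop:Dwyer}, $\alpha=\langle\alpha_1,\dots,\alpha_n\rangle^{\sA}_M$ is the class of the central extension $0\to\F_p\to\U(\sA)\times_{\overline{\U}(\sA)}(G/N)\to G/N\to 1$ pulled back from $0\to\F_p\to\U(\sA)\to\overline{\U}(\sA)\to 1$ along $\bar\rho$. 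The pair $(\rho,\mathrm{pr})$, with $\mathrm{pr}\colon G/N\cap G_{(n+1)}\to G/N$ the projection, is a homomorphism $G/N\cap G_{(n+1)}\to\U(\sA)\times_{\overline{\U}(\sA)}(G/N)$ lying over $\mathrm{id}_{G/N}$ whose restriction to kernels is $\varphi$ by the first paragraph; hence the pulled-back extension is the pushout of the displayed extension of $G/N$ by $N/N\cap G_{(n+1)}$ along $\varphi$, i.e.\ $\alpha=\varphi_*(\epsilon)$. Concretely, fixing a set-theoretic section $s$ of $\mathrm{pr}$, the $2$-cocycle $(g,h)\mapsto s(g)s(h)s(gh)^{-1}$ represents $\epsilon$ and $(g,h)\mapsto\rho(s(g))\rho(s(h))\rho(s(gh))^{-1}=\varphi\bigl(s(g)s(h)s(gh)^{-1}\bigr)\in\F_p$ represents $\alpha$.

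Finally I would invoke the standard identification of the transgression in the five-term exact sequence with the pushforward of the extension class: for $\psi\in\Hom(N/N\cap G_{(n+1)},\F_p)$ one has ${\rm trg}(\psi)=-\psi_*(\epsilon)$ (see \cite{NSW}). Together with the previous paragraph this yields ${\rm trg}(\varphi)=-\varphi_*(\epsilon)=-\alpha$, hence ${\rm trg}^{-1}(\alpha)=-\varphi$, and therefore $\langle\bar\sigma,\alpha\rangle=(-\varphi)(\bar\sigma)=-\rho_{1,n+1}(\bar\sigma)$. I expect the only delicate point to be the sign bookkeeping in this last step: matching the sign in the formula ${\rm trg}(\psi)=-\psi_*(\epsilon)$ with the sign conventions used in the five-term exact sequence and in the normalization $(\bar\rho_M)_{ij}=-a_{ij}$ of Theorem~\ref{thm:Dwyer} and Proposition~\ref{prop:Dwyer}. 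Should the cited convention be ambiguous, I would fix the sign by combining the explicit cocycle for $\alpha$ above with the standard inhomogeneous-cocycle description of ${\rm trg}$; the identifications in the first two paragraphs are routine diagram chases once Proposition~\ref{prop:Dwyer} is in hand.
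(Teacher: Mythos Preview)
Your argument is correct and follows the same outline as the paper's: restrict $\rho$ to $N/N\cap G_{(n+1)}$, note it lands in $Z(\sA)\cong\F_p$, and then check that the resulting homomorphism (with the appropriate sign) transgresses to $\alpha$. The only difference is in how that last verification is packaged. The paper simply defines $\Lambda(\rho)(\bar\tau)=-\rho_{1,n+1}(\bar\tau)$ and cites \cite[Lemma~3.7]{MT} for ${\rm trg}(\Lambda(\rho))=\alpha$, whereas you unpack this step: you use Proposition~\ref{prop:Dwyer} to identify $\alpha$ with the pushout $\varphi_*(\epsilon)$ of the extension class, and then invoke the standard formula ${\rm trg}(\psi)=-\psi_*(\epsilon)$ from \cite{NSW}. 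Your route is more self-contained relative to this paper (it uses only results already stated here plus a textbook identity), at the cost of the sign bookkeeping you flag; the paper's route is shorter because the sign and the Dwyer identification are already baked into the cited lemma.
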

\begin{proof} For any $\bar{\tau}\in  N/N\cap G_{(n+1)}$,  $\rho(\bar{\tau})$ is in $\ker(\U(\sA)\to\overline{\U}(\sA))$. Hence, we can write $\rho(\bar{\tau})=1+a(\bar{\tau})$, where $a(\bar{\tau})\in V_{n,n}(\sA)$.
For any $\bar{\sigma},\bar{\tau} \in N/N\cap G_{(n+1)}$, one has
\[
\begin{aligned}
1+a(\bar{\sigma}\bar{\tau}) &=\rho(\bar{\sigma}\bar{\tau})=\rho(\bar{\sigma})\rho(\bar{\tau})=(1+a(\bar{\sigma}))(1+a(\bar{\tau}))=1+a(\bar{\sigma})+a(\bar{\tau})+a(\bar{\sigma})a(\bar{\tau})\\
&=1+a(\bar{\sigma})+a(\bar{\tau}).
\end{aligned}
\] 
This implies that $a(\bar{\sigma}\bar{\tau})=a(\bar{\sigma})+a(\bar{\tau})$, for any $\bar{\sigma}$ and $\bar{\tau}$ in $N/N\cap G_{(n+1)}$. Thus, we can define $\Lambda(\rho)\in H^1(N/N\cap G_{(n+1)})={\rm Hom}(N/N\cap G_{(n+1)},\F_p)$ by 
\[
\Lambda(\rho)(\bar{\tau})=-\rho_{1,n+1}(\bar{\tau}), \text{ for }\bar{\tau} \in N/N\cap G_{(n+1)}.
\]
By \cite[Lemma 3.7]{MT}, one has ${\rm trg}(\Lambda(\rho))=\langle -\bar{\rho}_{1,2},\ldots,-\bar{\rho}_{n,n+1}\rangle^{\sA}_M=\alpha$. Hence 
\[
\langle \bar{\sigma}, \alpha \rangle=({\rm trg}^{-1}(\alpha))(\bar{\sigma}) = \Lambda(\rho)(\bar{\sigma})=-\rho_{1,n+1}(\bar{\sigma}).
\qedhere
\]
\end{proof}

\begin{lem} 
\label{lem: right non-degenerate}
 The pairing
\[N/N\cap G_{(n+1)} \times \ker (\Phi^n(G/N)\to \Phi^n(G/N\cap G_{(n+1)}))\to \F_p\] 
is right non-degenerate.
\end{lem}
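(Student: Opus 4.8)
The plan is to show that if $\alpha \in \ker(\Phi^n(G/N)\to \Phi^n(G/N\cap G_{(n+1)}))$ satisfies $\langle \bar\sigma, \alpha\rangle = 0$ for every $\bar\sigma \in N/N\cap G_{(n+1)}$, then $\alpha = 0$. Using Lemma~\ref{lem: pairing}, the vanishing condition translates into $\rho_{1,n+1}(\bar\sigma) = 0$ for all $\bar\sigma$, where $\rho\colon G/N\cap G_{(n+1)} \to \U(\sA)$ is the chosen lift of the homomorphism $\bar\rho\colon G/N\to\overline\U(\sA)$ attached to a defining system $M$ for $\alpha$. So the task is to upgrade ``$\rho$ kills $N/N\cap G_{(n+1)}$ in the $(1,n+1)$-coordinate'' to ``$\alpha = 0$''.

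First I would recall, via Theorem~\ref{thm:Dwyer} (and Proposition~\ref{prop:Dwyer}), that $\alpha = \langle\alpha_1,\dots,\alpha_n\rangle^{\sA}_M = 0$ in $H^2(G/N)$ precisely when $\bar\rho\colon G/N \to \overline\U(\sA)$ lifts to $\tilde\rho\colon G/N \to \U(\sA)$. We are given a lift $\rho$ defined only on $G/N\cap G_{(n+1)}$; the vanishing $\rho_{1,n+1}|_{N/N\cap G_{(n+1)}} = 0$ together with the fact (from the computation in Lemma~\ref{lem: pairing}) that $\rho|_{N/N\cap G_{(n+1)}}$ lands in $Z(\sA) = 1 + V_{n,n}(\sA)$ with $V_{n,n}(\sA)$ consisting only of $(1,n+1)$-entries (since $A_{1,n+1}=\F_p$ is the only slot with $j - i = n$) shows that $\rho$ is in fact trivial on the whole of $N/N\cap G_{(n+1)}$. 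Hence $\rho$ factors through $(G/N\cap G_{(n+1)})/(N/N\cap G_{(n+1)}) = G/N$, producing exactly the desired lift $\tilde\rho\colon G/N \to \U(\sA)$ of $\bar\rho$. By Theorem~\ref{thm:Dwyer} this gives $\langle\alpha_1,\dots,\alpha_n\rangle^{\sA}_M = 0$, i.e. $\alpha = 0$ in $H^2(G/N)$, and since $\alpha\in\Phi^n(G/N)\subseteq H^2(G/N)$, also $\alpha = 0$ in $\Phi^n(G/N)$.

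I expect the main obstacle to be the bookkeeping identifying $V_{n,n}(\sA)$ with $A_{1,n+1}$: one must be careful that with the standing hypothesis $A_{1,n+1} = \F_p$, the $(1,n+1)$-coordinate is the \emph{only} nonzero coordinate of any element of $V_{n,n}(\sA)$, so that $\rho_{1,n+1}(\bar\sigma) = 0$ genuinely forces $\rho(\bar\sigma) = 1$ rather than merely killing one component. A secondary point requiring care is continuity: the lift $\rho$ and the induced $\tilde\rho$ must be continuous homomorphisms of profinite groups, but this is automatic since we are factoring a continuous homomorphism through a quotient by a closed subgroup. Everything else — the translation via Lemma~\ref{lem: pairing}, the equivalence of liftability and triviality of the Massey value via Theorem~\ref{thm:Dwyer} — is already available in the excerpt, so the argument is essentially a matter of assembling these pieces in the right order.
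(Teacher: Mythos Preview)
Your proposal is correct and follows essentially the same route as the paper: translate the vanishing $\langle\bar\sigma,\alpha\rangle=0$ via Lemma~\ref{lem: pairing} into $\rho_{1,n+1}(\bar\sigma)=0$, use that $\rho(\bar\sigma)\in Z(\sA)=1+V_{n,n}(\sA)$ has the $(1,n+1)$-entry as its only coordinate to conclude $\rho(\bar\sigma)=1$, factor $\rho$ through $G/N$, and invoke Theorem~\ref{thm:Dwyer} to get $\alpha=0$. The only cosmetic difference is that the paper justifies $\rho(\bar\sigma)\in Z(\sA)$ by appealing to Lemma~\ref{lem: trivial on Zassenhaus subgroups} and the hypothesis $N\subseteq G_{(n)}$, whereas you (equivalently) read it off from the commutative diagram/the computation in Lemma~\ref{lem: pairing}.
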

\begin{proof} Given $\alpha \in \ker (\Phi^n(G/N)\to \Phi^n(G/N\cap G_{(n+1)}))$ such that $\langle\bar{\sigma},\alpha\rangle=0$ for every $\bar{\sigma}\in N/N\cap G_{(n+1)}$, we need to show that $\alpha=0$. 
 One has $\alpha=\langle \alpha_1,\ldots,\alpha_n\rangle^{\sA}_M$, for some multiplicative system $\sA$, some classes $\alpha_i$ and some defining system $M$. 
Let $\bar\rho\colon G/N\to \overline\U(\sA)$ be the homomorphism corresponding to the defining system $M$. We have already seen that there exists a lift $\rho\colon G/N\cap G_{(n+1)}\to \U(\sA)$ of $\bar\rho$. By Lemma~\ref{lem: pairing}, for every $\bar{\sigma}\in N/N\cap G_{(n+1)}$ one has
\[
\rho_{1,n+1}(\bar{\sigma})=- \langle \bar{\sigma}, \alpha\rangle =0. 
\]
This implies that $\rho(\bar{\sigma})=1$ for every $\bar{\sigma} \in N/N\cap G_{(n+1)}$. (Note that we assume that $N\subseteq G_{(n)}$, hence $\bar\rho$ is trivial on $N$ by Lemma~\ref{lem: trivial on Zassenhaus subgroups}.)
Thus $\rho$ factors through a homomorphism $\rho^\prime\colon G/N\to \U(\sA)$:
\[
\xymatrix{
N/N\cap G_{(n+1)}\ar@{->}[r]    & G/N\cap G_{(n+1)} \ar@{->}[d]^{\rho} \ar@{->}[r] &G/N \ar@{->}[d]^{\bar\rho}\ar@{->}[r] \ar@{->}[ld]^{\rho^\prime} &1\\
\F_p\ar@{->}[r] &\U(\sA)\ar[r] &\overline{\U}(\sA)\ar[r] &1
}
\]
This implies that $\alpha=\langle \alpha_1,\ldots,\alpha_n\rangle^{\sA}_M=0$.
\end{proof}

\begin{prop} 
\label{prop: non-degenerate and interesection}
 Let $G$ be a profinite group. Consider the following statements.
\begin{enumerate}
\item For every normal subgroup $N$  of $G$ such that $N\leq G_{(n)}$, the pairing
\[N/N\cap G_{(n+1)} \times \ker (\Phi^n(G/N)\to \Phi^n(G/N\cap G_{(n+1)}))\to \F_p\] 
is non-degenerate.
\item  $G_{(n+1)}=\cap \ker (\rho:G\to \U(\sA))$, where the intersection is taken as $\sA$ runs over the collection of all rank $n$ multiplicative systems with $A_{1,n+1}=\F_p$ and $\rho$ runs over all homomorphism $G\to \U(\sA)$.
\end{enumerate}
Then $(2)$ implies $(1)$.
\end{prop}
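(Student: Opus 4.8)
The pairing in question is already known to be right non-degenerate, with no hypothesis on $G$, by Lemma~\ref{lem: right non-degenerate}; so, assuming $(2)$, it suffices to establish left non-degeneracy. Fix a normal subgroup $N$ of $G$ with $N\leq G_{(n)}$, and let $\bar\sigma\in N/N\cap G_{(n+1)}$ be such that $\langle\bar\sigma,\alpha\rangle=0$ for every $\alpha\in\ker(\Phi^n(G/N)\to\Phi^n(G/N\cap G_{(n+1)}))$; choose a representative $\sigma\in N$. Since $\sigma\in N$, proving $\bar\sigma=0$ amounts to proving $\sigma\in G_{(n+1)}$, and by hypothesis $(2)$ this reduces to showing that $\rho(\sigma)=1$ for every rank $n$ multiplicative system $\sA$ with $A_{1,n+1}=\F_p$ and every continuous homomorphism $\rho\colon G\to\U(\sA)$. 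So fix such a $\rho$.

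First I would descend $\rho$. Since $\U(\sA)_{(n+1)}=1$ and $\overline{\U}(\sA)_{(n)}=1$ by Lemma~\ref{lem:2.3}(3), the homomorphism $\rho$ kills $G_{(n+1)}\supseteq N\cap G_{(n+1)}$ and so factors as $\rho'\colon G/N\cap G_{(n+1)}\to\U(\sA)$, while the composite $\bar\rho\colon G\to\U(\sA)\to\overline{\U}(\sA)$ kills $G_{(n)}\supseteq N$ and factors through $\bar\rho'\colon G/N\to\overline{\U}(\sA)$; thus $\rho'$ is a lift of the homomorphism $G/N\cap G_{(n+1)}\to G/N\xrightarrow{\bar\rho'}\overline{\U}(\sA)$. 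By Theorem~\ref{thm:Dwyer}, $\bar\rho'=\bar\rho_M$ for a defining system $M$ of a generalized Massey product $\langle\alpha_1,\dots,\alpha_n\rangle^{\sA}$ with $\alpha_i=-(\bar\rho')_{i,i+1}$; put $\alpha=\langle\alpha_1,\dots,\alpha_n\rangle^{\sA}_M$, which, because $A_{1,n+1}=\F_p$, lies in $\Phi^n(G/N)\subseteq H^2(G/N)$. By Proposition~\ref{prop:Dwyer}, $\inf(\alpha)\in H^2(G/N\cap G_{(n+1)})$ is the class of the pull back of the central extension $0\to\F_p\to\U(\sA)\to\overline{\U}(\sA)\to1$ along $G/N\cap G_{(n+1)}\to G/N\xrightarrow{\bar\rho'}\overline{\U}(\sA)$; since the lift $\rho'$ splits this pull back, $\inf(\alpha)=0$ in $H^2(G/N\cap G_{(n+1)})$. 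By Lemma~\ref{lem:3.1}, this means $\alpha\in\ker(\Phi^n(G/N)\to\Phi^n(G/N\cap G_{(n+1)}))$, so $\alpha$ is an admissible second argument of the pairing and the hypothesis on $\bar\sigma$ gives $\langle\bar\sigma,\alpha\rangle=0$.

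Now I would apply Lemma~\ref{lem: pairing} with the lift $\rho'$: it yields $0=\langle\bar\sigma,\alpha\rangle=-\rho'_{1,n+1}(\bar\sigma)=-\rho_{1,n+1}(\sigma)$, i.e. $\rho_{1,n+1}(\sigma)=0$. To upgrade this to $\rho(\sigma)=1$, note that $\sigma\in N\subseteq G_{(n)}$ forces $\rho(\sigma)\in\rho(G_{(n)})\subseteq\U(\sA)_{(n)}\subseteq\U_{n,n}(\sA)=Z(\sA)$ by Lemma~\ref{lem:2.3}(3); and since $1\leq i<j\leq n+1$, the only pair with $j-i\geq n$ is $(1,n+1)$, so $V_{n,n}(\sA)=A_{1,n+1}=\F_p$ and the map $Z(\sA)\to\F_p$, $1+r\mapsto r_{1,n+1}$, is an isomorphism. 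Hence $\rho(\sigma)$ is determined by its $(1,n+1)$-entry $\rho_{1,n+1}(\sigma)=0$, so $\rho(\sigma)=1$. As $\rho$ (and $\sA$) was arbitrary, $(2)$ gives $\sigma\in G_{(n+1)}$, whence $\bar\sigma=0$; together with Lemma~\ref{lem: right non-degenerate} this shows the pairing is non-degenerate, proving $(1)$.

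The only real work beyond assembling the quoted results is the bookkeeping in the dictionary between continuous homomorphisms into $\U(\sA)$ and $\overline{\U}(\sA)$, defining systems for generalized Massey products, and the transgression pairing: one must check that the class $\alpha$ extracted from an arbitrary $\rho$ genuinely lands in the domain of the pairing — this is exactly where Proposition~\ref{prop:Dwyer} (identifying the lifting obstruction with $\inf(\alpha)$) and Lemma~\ref{lem:3.1} (collapsing the three relevant kernels) are needed — and that the value of the pairing computed from the specific lift $\rho'$ returns $\rho_{1,n+1}(\sigma)$. The one extra, elementary ingredient not already isolated as a lemma is the observation that $\rho$ sends $G_{(n)}$ into $Z(\sA)\cong\F_p$, so that vanishing of the single coordinate $\rho_{1,n+1}(\sigma)$ already forces $\rho(\sigma)=1$.
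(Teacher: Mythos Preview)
Your proof is correct and follows essentially the same route as the paper's: reduce to left non-degeneracy via Lemma~\ref{lem: right non-degenerate}, descend an arbitrary $\rho$ through $G/N\cap G_{(n+1)}$ and $G/N$, extract the Massey-product class $\alpha\in\Phi^n(G/N)$ from the induced $\bar\rho'$, use the existence of the lift (together with Lemma~\ref{lem:3.1}) to place $\alpha$ in the required kernel, invoke Lemma~\ref{lem: pairing} to get $\rho_{1,n+1}(\sigma)=0$, and finish by observing that $\rho(\sigma)\in Z(\sA)$. The only cosmetic difference is that you spell out the vanishing of $\inf(\alpha)$ via Proposition~\ref{prop:Dwyer} and the inclusion $\rho(G_{(n)})\subseteq Z(\sA)$ via Lemma~\ref{lem:2.3}(3), whereas the paper leaves these implicit.
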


\begin{proof} Suppose that $(2)$ holds. 
 By Lemma~\ref{lem: right non-degenerate} it is enough to show that the pairing is left non-degenerate. 
For each $\sigma \in N$, we denote $\bar{\sigma}$ its image in the quotient group $N/N \cap G_{(n+1)}$.

Suppose that $\sigma\in N$ such that $\langle \bar\sigma,\alpha\rangle=0$, for every 
$\alpha\in \ker(\Phi^n(G/N)\to \Phi^n(G/N\cap G_{(n+1)}))$.  Let $\sA$ be any rank $n$ multiplicative system  with $A_{1,n+1}=\F_p$ and $\rho_0\colon G\to \U(\sA)$ any homomorphism. 
By Lemma~\ref{lem: trivial on Zassenhaus subgroups}, $\rho_0$ factors through $\rho\colon G/N\cap G_{(n+1)}\to \U(\sA)$, and the composition $G/N\cap G_{(n+1)}\to\U(\sA) \to \overline{\U}(\sA)$ factors through $\bar\rho\colon G/N \to \overline\U(\sA)$, so that the following diagram is commutative
\[
\xymatrix{
G/N\cap G_{(n+1)} \ar@{->}[r] \ar@{->}[d]_{\rho} & G/N \ar@{->}[r] \ar@{->}[d]_{\bar\rho} & 1\\
\U(\sA) \ar@{->}[r] & \overline{\U}(\sA)\to 1.
}
\]
Let $\alpha\in \Phi^n(G/N)$ be the element corresponding to $\bar\rho$. Then ${\rm inf}(\alpha)=0 $ in $H^2(G/N\cap G_{(n+1)})$ since $\rho$ is a lift of $\bar\rho$.
Thus by Lemma~\ref{lem:3.1}, $\alpha$ is in $\ker(\Phi^n(G/N)\to \Phi^n(G/N\cap G_{(n+1)}))$.

On the other hand, by Lemma~\ref{lem: pairing} 
\[
\langle \bar \sigma, \alpha\rangle = -\rho(\bar\sigma)_{1,n+1}.
\]
Hence $\rho(\bar{\sigma})_{1,n+1}=0$.

By Lemma~\ref{lem: trivial on Zassenhaus subgroups} and since $\sigma \in N \leq G_{(n)}$, the image of $\rho(\bar{\sigma})$ in $\overline{\U}(\sA)$ is trivial. Since also  $\rho(\bar{\sigma})_{1,n+1}=0$, this implies that 
 $\rho(\bar{\sigma}) = 1$. Hence $\rho_0(\sigma) = 1$, i.e., $\sigma \in \ker \rho_0$. Because (2) holds, we conclude that $\sigma \in G_{(n+1)}$ and thus $\bar{\sigma}$ is trivial in $N / N \cap G_{(n+1)}$. 
\end{proof}

\begin{thm} 
\label{thm:kernel-pairing}
 Let $G$ be a pro-finite group. The following two statements are equivalent.
\begin{enumerate}
\item For every $k=2,\ldots,n$, the pairing
\[G_{(k)}/G_{(k+1)} \times \ker (\Phi^k(G/G_{(k)})\to \Phi^k(G/G_{(k+1)}))\to \F_p\] 
is non-degenerate.
\item For every $k=1,\ldots, n$, one has $G_{(k+1)}=\cap \ker (\rho\colon G\to \U(\sA))$, where the intersection is taken as $\sA$ runs over the collection of all rank $k$ multiplicative systems with $A_{1,k+1}=\F_p$ and $\rho$ runs over all homomorphism $G\to \U(\sA)$.
\end{enumerate}
\end{thm}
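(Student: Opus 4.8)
The plan is to prove both implications by induction on $n$, leaning on the machinery already assembled in Section~3. The key observation is that both statements~(1) and~(2) are "for every $k \le n$" statements, so it suffices to show that, assuming statements~(1) and~(2) hold for all smaller indices, the $k=n$ clause of~(1) is equivalent to the $k=n$ clause of~(2), provided we apply the earlier results to the quotient group $G/G_{(n)}$ in place of an arbitrary normal subgroup $N$. Concretely, I would take $N = G_{(n)}$ inside the group $G$: then $N \le G_{(n)}$ trivially, and $N/N\cap G_{(n+1)} = G_{(n)}/G_{(n+1)}$, while $\ker(\Phi^n(G/N) \to \Phi^n(G/N\cap G_{(n+1)})) = \ker(\Phi^n(G/G_{(n)}) \to \Phi^n(G/G_{(n+1)}))$. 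So the $k=n$ pairing in~(1) is exactly the pairing of Section~3 with this choice of $N$.

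First I would prove $(2) \Rightarrow (1)$, which is nearly immediate: Proposition~\ref{prop: non-degenerate and interesection} says that the intersection statement~(2) for a fixed index, applied with any normal $N \le G_{(n)}$, forces the corresponding pairing to be non-degenerate; taking $N = G_{(n)}$ and using the $k=n$ clause of~(2) gives the $k=n$ clause of~(1), and letting $n$ range over $2,\dots,n$ gives all of~(1). (One should note that Proposition~\ref{prop: non-degenerate and interesection} is stated for $n \ge 2$, matching the range of $k$ in statement~(1), and that the $k=1$ clause of~(2) is handled separately by Remark~\ref{rmk:2.5}, so no circularity arises.)

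The substantive direction is $(1) \Rightarrow (2)$, and here I would argue by induction on $k$. The base case $k=1$ is Remark~\ref{rmk:2.5} (the $n=1$ case), which holds unconditionally for any pro-$p$ group, so assume $2 \le k \le n$ and that $G_{(k)} = \bigcap\ker(\rho\colon G\to \U(\sA))$ over rank $k-1$ systems already holds. The inclusion $G_{(k+1)} \subseteq \bigcap\ker(\rho\colon G\to \U(\sA))$ over rank $k$ systems is Remark~\ref{rmk:2.5} again. For the reverse inclusion, take $\sigma \notin G_{(k+1)}$; I must produce a rank $k$ multiplicative system $\sA$ with $A_{1,k+1} = \F_p$ and a homomorphism $\rho_0\colon G\to \U(\sA)$ with $\rho_0(\sigma)\ne 1$. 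If already $\sigma\notin G_{(k)}$, then by the inductive hypothesis there is a rank $k-1$ system $\bar\sA$ and $\rho\colon G\to\U(\bar\sA)$ with $\rho(\sigma)\ne 1$, and Remark~\ref{rmk:2.2} embeds $\U(\bar\sA)$ into $\U(\sA)$ for a suitable rank $k$ system, giving what we want. So assume $\sigma\in G_{(k)}\setminus G_{(k+1)}$; then $\bar\sigma$ is a nonzero element of $G_{(k)}/G_{(k+1)}$. By the left non-degeneracy of the pairing in the $k$-th clause of~(1), there exists $\alpha \in \ker(\Phi^k(G/G_{(k)}) \to \Phi^k(G/G_{(k+1)}))$ with $\langle \bar\sigma, \alpha\rangle \ne 0$. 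Writing $\alpha = \langle \alpha_1,\dots,\alpha_k\rangle^{\sA}_M$ with corresponding $\bar\rho\colon G/G_{(k)} \to \overline{\U}(\sA)$, the condition $\alpha \in \ker(\cdot \to \Phi^k(G/G_{(k+1)}))$ together with Lemma~\ref{lem:3.1} means $\inf(\alpha) = 0$ in $H^2(G/G_{(k+1)})$, so $\bar\rho$ lifts to $\rho\colon G/G_{(k+1)} \to \U(\sA)$; by Lemma~\ref{lem: pairing}, $\rho_{1,k+1}(\bar\sigma) = -\langle\bar\sigma,\alpha\rangle \ne 0$, hence $\rho(\bar\sigma)\ne 1$. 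Composing with $G\to G/G_{(k+1)}$ produces $\rho_0\colon G\to\U(\sA)$ with $\rho_0(\sigma)\ne 1$, completing the induction.

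The main obstacle I anticipate is purely bookkeeping: making sure the induction is set up so that the $k=1$ case (which needs Remark~\ref{rmk:2.5} rather than the pairing, since Section~3's pairing machinery assumes $n\ge 2$) is cleanly separated, and that when I reduce "$\sigma\in G_{(k)}\setminus G_{(k+1)}$" I have correctly identified the Section~3 pairing (with $N = G_{(k)}$, working inside $G$) with the $k$-th pairing in statement~(1) — in particular that $G/N\cap G_{(n+1)}$ there becomes $G/G_{(k+1)}$ here. Everything else is a direct appeal to Lemmas~\ref{lem:3.1}, \ref{lem: pairing}, \ref{lem: right non-degenerate}, Proposition~\ref{prop: non-degenerate and interesection}, and Remarks~\ref{rmk:2.2} and~\ref{rmk:2.5}.
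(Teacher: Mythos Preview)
Your proposal is correct and follows essentially the same approach as the paper: $(2)\Rightarrow(1)$ via Proposition~\ref{prop: non-degenerate and interesection} with $N=G_{(k)}$, and $(1)\Rightarrow(2)$ by induction on $k$ with base case $k=1$ from Remark~\ref{rmk:2.5}, the case $\sigma\notin G_{(k)}$ handled by the inductive hypothesis plus the embedding of Remark~\ref{rmk:2.2}, and the case $\sigma\in G_{(k)}\setminus G_{(k+1)}$ handled by left non-degeneracy together with Lemma~\ref{lem: pairing}. The only cosmetic difference is that you invoke Lemma~\ref{lem:3.1} explicitly to pass from the $\Phi^k$-kernel to vanishing of $\inf(\alpha)$ in $H^2(G/G_{(k+1)})$, whereas the paper takes this as understood; your reference to Lemma~\ref{lem: right non-degenerate} is not actually needed here (it is used inside the proof of Proposition~\ref{prop: non-degenerate and interesection}, not in this argument).
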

\begin{proof}
By Proposition~\ref{prop: non-degenerate and interesection} we see that (2) implies (1).

Now suppose that $(1)$ holds true. For a fixed $n\geq 2$, we proceed on induction on $k$, $1\leq k\leq n$, to show that $G_{(k+1)}=\cap \ker (\rho\colon G\to \U(\sA))$.
The statement is true for $k=1$, because we 
always have $G_{(2)} = \bigcap \ker(\rho\colon G \to \U(\sA))$ by Remark~\ref{rmk:2.5}. We suppose that $2\leq k\leq n$.
 By the induction hypothesis and by Lemma~\ref{lem: trivial on Zassenhaus subgroups}, it is enough to show that 
\[ G_{(k+1)} \supseteq \bigcap \ker (\rho:G\to \U(\sA)),\]
 where the intersection is taken as $\sA$ runs over the collection of all rank $k$ multiplicative systems with $A_{1,k+1}=\F_p$ and $\rho$ runs over all homomorphism $G\to \U(\sA)$. Equivalently, we shall show that for any $\sigma\not\in G_{(k+1)}$,  there exists a rank $k$ multiplicative system  $\sA$ with $A_{1,k+1}=\F_p$  and a homomorphism $\rho\colon G\to \U(\sA)$ such that
$\rho(\sigma)\not=1$. 

Let $\sigma\not\in G_{(k+1)}$. If $\sigma\not \in G_{(k)}$ then by the induction hypothesis,  there exists a multiplicative system of rank $k-1$ and a homomorphism $\rho\colon G\to \U(\sA)$ such that $\rho(\sigma)\not=1$. By Remark~\ref{rmk:2.2},  there exists a multiplicative system $\mathcal{A}$ of rank $k$ and an injective homomorphism $\iota\colon \U(\overline{\mathcal{A}}) \to \U(\mathcal{A})$. Let $\rho \colon G \to \U(\mathcal{A})$ be the composition of $\bar{\rho}$ and $\iota$. Then we have $\rho(\sigma) \neq 1$.

Now we assume that $\sigma \in G_{(k)}\setminus G_{(k+1)}$. 
 Since $\sigma$ is nontrivial in $G_{(k)}/G_{(k+1)}$ and by the non-degenerateness of the pairing in (1), there is $\alpha\in \ker(\Phi^2(G/G_{(k)})\to \Phi^2(G/G_{(k+1)}))$ such that $\langle \sigma, \alpha\rangle\not=0$.
From the definition of $\Phi^k(G/G_{(k)})$, there exists a rank $k$ multiplicative system $\sA$ with $A_{1,k+1}=\F_p$ and a group homomorphism $\bar\rho\colon G\to \overline{\U}(\sA)$ which corresponds to $\alpha$. Since ${\rm inf}(\alpha)=0$ in $H^2(G/G_{(k+1)})$, there exists a lift $\rho\colon G/G_{(k+1)}\to \U(\sA)$ of $\bar{\rho}$. By Lemma~\ref{lem: pairing} (applying to $N=G_{(k)}$), one has 
\[
\rho(\sigma)_{1,k+1}=- \langle \sigma,\alpha\rangle\not =0.
\]
Hence $\rho(\sigma)\not =1$, and we are done.
\end{proof}

From now on, let $S$ be a free pro-$p$-group, $R$ be a normal subgroup of $S$ with $R\leq S_{(n)}$ and $G:=S/R$.
\begin{cor} 
\label{cor:perfect}
The pairing 
\[R/R\cap S_{(n+1)}\times \Phi^n(S/R)\to \F_p
\]
is non-degenerate.

In particular, the pairing
\[S_{(n)}/S_{(n+1)}\times \Phi^n(S/S_{(n)})\to \F_p
\]
is non-degenerate.
\end{cor}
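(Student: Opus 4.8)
The strategy is to observe that the pairing in the statement is literally the pairing constructed earlier in this section, applied to the profinite group $S$ together with its normal subgroup $R$, and then to combine Efrat's theorem (Theorem~\ref{thm:Efrat}, imported via Remark~\ref{rmk:2.5}) with Proposition~\ref{prop: non-degenerate and interesection}.

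First I would identify the second factor. Since $S$ is a free pro-$p$ group, $H^2(S)=H^2(S,\F_p)=0$. Applying Lemma~\ref{lem:3.1} with $S$ in the role of $G$ and $R$ in the role of $N$ — legitimate since $R$ is normal in $S$ and $R\leq S_{(n)}\subseteq S_{(2)}$ (recall $n\geq 2$) — we get
\[
\ker\bigl(\Phi^n(S/R)\to \Phi^n(S/R\cap S_{(n+1)})\bigr)=\ker\bigl(\Phi^n(S/R)\to H^2(S)\bigr)=\Phi^n(S/R).
\]
Hence the map $R/R\cap S_{(n+1)}\times \Phi^n(S/R)\to \F_p$ in the statement is exactly the pairing
\[
R/R\cap S_{(n+1)}\times \ker\bigl(\Phi^n(S/R)\to \Phi^n(S/R\cap S_{(n+1)})\bigr)\to \F_p
\]
defined above for the pair $(S,R)$.

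Now for non-degeneracy. Right non-degeneracy is Lemma~\ref{lem: right non-degenerate} applied to $(S,R)$. For left non-degeneracy, I would invoke Proposition~\ref{prop: non-degenerate and interesection} with $G=S$: a free pro-$p$ group satisfies statement (2) of that proposition, i.e.\ $S_{(n+1)}=\bigcap\ker(\rho\colon S\to\U(\sA))$, by Remark~\ref{rmk:2.5} (whose free case is precisely Theorem~\ref{thm:Efrat}). Therefore $S$ satisfies statement (1) of the proposition, and taking the normal subgroup there to be $R$ gives the left non-degeneracy of our pairing. Together the two halves prove the first assertion.

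Finally, the ``in particular'' clause is the special case $R=S_{(n)}$: this $R$ is a normal subgroup of $S$ with $R\leq S_{(n)}$, and since $S_{(n+1)}\subseteq S_{(n)}$ one has $R\cap S_{(n+1)}=S_{(n+1)}$ and $S/R=S/S_{(n)}$, so the first assertion specializes to non-degeneracy of $S_{(n)}/S_{(n+1)}\times\Phi^n(S/S_{(n)})\to\F_p$. I do not expect any genuine obstacle here: all of the substance is transported from Theorem~\ref{thm:Efrat} through Remark~\ref{rmk:2.5} and Proposition~\ref{prop: non-degenerate and interesection}, and the only things to verify are the bookkeeping identification of the two pairings (the vanishing $H^2(S)=0$ together with Lemma~\ref{lem:3.1}) and that $R$ is an admissible choice of normal subgroup.
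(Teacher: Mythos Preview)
Your proposal is correct and follows essentially the same route as the paper: use $H^2(S)=0$ with Lemma~\ref{lem:3.1} to identify the second factor as all of $\Phi^n(S/R)$, then apply Proposition~\ref{prop: non-degenerate and interesection} with $G=S$, whose hypothesis (2) holds by the free case in Remark~\ref{rmk:2.5} (i.e.\ Theorem~\ref{thm:Efrat}). Your separate invocation of Lemma~\ref{lem: right non-degenerate} for right non-degeneracy is harmless but redundant, since conclusion~(1) of Proposition~\ref{prop: non-degenerate and interesection} already asserts full non-degeneracy; the paper simply appeals to that proposition once.
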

\begin{proof} Since $S$ is free pro-$p$ group, we have $H^2(S)=0$. (See \cite[Theorem 4.12]{Ko}.) Hence, by Lemma~\ref{lem:3.1},
\[
\ker(\Phi^n(S/R)\to \Phi^n(S/R\cap S_{(n+1)}))=\ker(\Phi^n(S/R)\to H^2(S))= \Phi^n(S/R).
\] 
On the other hand, free groups have the intersection property by Theorem~\ref{thm:Efrat}, so the condition (2) in Proposition~\ref{prop: non-degenerate and interesection} holds for $G=S$. Hence  the condition (1) in Proposition~\ref{prop: non-degenerate and interesection} holds for $G=S$ and $N=R$. Thus the  pairing 
\[
R/R\cap S_{(n+1)}\times \Phi^n(S/R)\to \F_p
\] 
is non-degenerate. 
\end{proof}
\begin{rmk}
\label{rmk:Pontryagin}
Let the notation be as in the previous corollary. We equip the group $\Phi^n(S/R)$,  $\Phi^n(S/S_{(n)})$  (and $\F_p$) with the discrete topology. 
Recall that  $R/R\cap S_{(n+1)}$ is an abelian $p$-elementary compact group. We show below that $R/R\cap S_{(n+1)}$ and $\Phi^n(S/R)$ are in fact Pontryagin duals to each other. (See 
\cite[page 47]{Morris} for the definition of Pontryagin dual.)
The pairing is continuous and the non-degenerateness of the pairing of the compact ($p$-elementary) abelian group $R/R\cap S_{(n+1)}$ and the discrete ($p$-elementary)  abelian group $\Phi^n(S/R)$  implies that the pairing is actually perfect, i.e., the compact  group $R/R\cap S_{(n+1)}$ and the discrete  group $\Phi^n(S/R)$ are Pontryagin dual to each other.
 In fact, the induced map $\Phi^n(S/R)\to {\rm Hom} (R/R\cap S_{(n+1)}, \F_p)$ is open and injective. (Recall that a continuous map is open if  it maps open sets to open sets. In our case the map is open since both $\Phi^n(S/R)$ and ${\rm Hom} (R/R\cap S_{(n+1)}, \F_p)$ are discrete.) Hence the dual map $R/R\cap S_{(n+1)} \to {\rm Hom}(\Phi^n(S/R),\F_p)$ is surjective (by \cite[Proposition 30]{Morris}) and thus it is an isomorphism. Therefore the pairings in Corollary \ref{cor:perfect} are perfect.  See also \cite[Theorem 44]{Pon}, which says that any non-degenerate pairing between a compact group and a disrete group is perfect.
\end{rmk} 
\begin{cor} The pairing
 \[S_{(n)}/RS_{(n+1)} \times \ker(\Phi^n(S/S_{(n)}) \to \Phi^n(S/R))\to \F_p\] 
 is non-degenerate.
\end{cor}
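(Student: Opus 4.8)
The plan is to identify this pairing with the one induced on subquotients by the \emph{perfect} pairing
\[
S_{(n)}/S_{(n+1)}\times\Phi^n(S/S_{(n)})\to\F_p
\]
of Corollary~\ref{cor:perfect}, which by Remark~\ref{rmk:Pontryagin} exhibits the compact $\F_p$-vector space $S_{(n)}/S_{(n+1)}$ and the discrete one $\Phi^n(S/S_{(n)})$ as Pontryagin duals. Observe that $RS_{(n+1)}/S_{(n+1)}$ is a closed subgroup of $S_{(n)}/S_{(n+1)}$, being the image of the compact subgroup $R$, and $S_{(n)}/RS_{(n+1)}$ is its quotient; so it suffices to prove that the annihilator of $RS_{(n+1)}/S_{(n+1)}$ inside $\Phi^n(S/S_{(n)})$ is exactly $\ker(\Phi^n(S/S_{(n)})\to\Phi^n(S/R))$, and then quote the standard fact (see the references in Remark~\ref{rmk:Pontryagin}) that for a compact–discrete Pontryagin-dual pair the quotient by a closed subgroup is, via the induced pairing, dual to the annihilator of that subgroup. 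The ``$\subseteq$'' half of the annihilator identification also guarantees that the pairing in the statement is well defined.

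To carry out the annihilator computation, fix $\alpha=\langle\alpha_1,\dots,\alpha_n\rangle^{\sA}_M\in\Phi^n(S/S_{(n)})$ with associated homomorphism $\bar\rho\colon S/S_{(n)}\to\overline{\U}(\sA)$, and let $q\colon S\to S/S_{(n)}$ be the projection. Since $H^2(S)=0$, Hoechsmann's lemma yields a lift $\tilde\rho\colon S\to\U(\sA)$ of $\bar\rho\circ q$; by Lemma~\ref{lem: trivial on Zassenhaus subgroups}(2) it kills $S_{(n+1)}$, so it factors through a lift $S/S_{(n+1)}\to\U(\sA)$ of $\bar\rho$, and Lemma~\ref{lem: pairing} gives $\langle\bar\sigma,\alpha\rangle=-\tilde\rho(\sigma)_{1,n+1}$ for $\sigma\in S_{(n)}$. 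Because $\bar\rho$ is trivial on $S_{(n)}$ (Lemma~\ref{lem: trivial on Zassenhaus subgroups}(1)), for such $\sigma$ we have $\tilde\rho(\sigma)\in Z(\sA)=1+V_{n,n}(\sA)$, and since $V_{n,n}(\sA)$ is concentrated in the $(1,n+1)$-entry, $\tilde\rho(\sigma)=1$ if and only if $\tilde\rho(\sigma)_{1,n+1}=0$. Hence $\alpha$ annihilates $RS_{(n+1)}/S_{(n+1)}$ precisely when $\tilde\rho$ is trivial on $R$. If $\tilde\rho|_R=1$ then, since $q$ factors through $S/R$ (as $R\le S_{(n)}$), $\tilde\rho$ descends to a lift $S/R\to\U(\sA)$ of the inflated homomorphism, so by Theorem~\ref{thm:Dwyer} the inflation of $\alpha$ to $H^2(S/R)$ vanishes, i.e.\ $\alpha\in\ker(\Phi^n(S/S_{(n)})\to H^2(S/R))=\ker(\Phi^n(S/S_{(n)})\to\Phi^n(S/R))$. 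Conversely, if that inflation vanishes, Hoechsmann's lemma provides a lift $\rho_R\colon S/R\to\U(\sA)$ of the inflated $\bar\rho$; pulling $\rho_R$ back to $S$ gives a second lift of $\bar\rho\circ q$, so $\tilde\rho$ and this pullback differ by a continuous homomorphism $\chi\colon S\to Z(\sA)\cong\F_p$. Any such $\chi$ kills $S^p[S,S]=S_{(2)}\supseteq S_{(n)}\supseteq R$, so on $R$ the map $\tilde\rho$ agrees with the pullback of $\rho_R$, which is trivial there; hence $\tilde\rho|_R=1$. This completes the identification of the annihilator.

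Putting the pieces together: the pairing in the statement is the pairing induced by the Pontryagin-dual pair $(S_{(n)}/S_{(n+1)},\Phi^n(S/S_{(n)}))$ on the quotient $S_{(n)}/RS_{(n+1)}$ and the annihilator $\ker(\Phi^n(S/S_{(n)})\to\Phi^n(S/R))$ of $RS_{(n+1)}/S_{(n+1)}$, hence it is itself perfect, and in particular non-degenerate.

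The main obstacle is the bookkeeping of the middle paragraph: correctly tracking which homomorphism is a lift of which, verifying the descent of $\tilde\rho$ along $S\to S/R$, and using the structural fact that two lifts to $\U(\sA)$ of a fixed homomorphism to $\overline{\U}(\sA)$ differ by a homomorphism into the central subgroup $Z(\sA)\cong\F_p$ — so that this difference, being a homomorphism to $\F_p$, automatically dies on $S_{(2)}$ and hence on $R$. (One could instead deduce the annihilator identification from naturality of transgression for the morphism of extensions $1\to R/R\cap S_{(n+1)}\to S/R\cap S_{(n+1)}\to S/R\to1$ into $1\to S_{(n)}/S_{(n+1)}\to S/S_{(n+1)}\to S/S_{(n)}\to1$, but that route requires comparable care with the inflation and restriction maps.)
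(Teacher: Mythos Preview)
Your proof is correct. It follows essentially the same architecture as the paper's ``alternative'' argument: identify the pairing in the statement as the one induced on $\mathrm{coker}(\alpha)\times\ker(\beta)$ from the perfect pairing $S_{(n)}/S_{(n+1)}\times\Phi^n(S/S_{(n)})\to\F_p$, show $\ker(\beta)$ equals the annihilator of $\mathrm{im}(\alpha)=RS_{(n+1)}/S_{(n+1)}$, and invoke Pontryagin duality for the quotient--annihilator pair.

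The difference is in \emph{how} you establish $\ker(\beta)=\mathrm{Ann}(\mathrm{im}\,\alpha)$. The paper uses \emph{both} perfect pairings of Corollary~\ref{cor:perfect} (for $N=R$ and for $N=S_{(n)}$), records their compatibility $\langle \alpha x,y\rangle_2=\langle x,\beta y\rangle_1$, and reads off the annihilator equality from that (or simply cites \cite[Lemma~2.3]{Ef}). You instead use only the pairing for $N=S_{(n)}$ and compute the annihilator by hand via lifts: $\alpha$ annihilates $R$ iff some chosen lift $\tilde\rho\colon S\to\U(\sA)$ kills $R$, iff $\tilde\rho$ descends to $S/R$, iff $\inf(\alpha)=0$ in $H^2(S/R)$. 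The delicate step---that any two lifts of $\bar\rho\circ q$ differ by a homomorphism into the central $Z(\sA)\cong\F_p$, which then automatically dies on $S_{(2)}\supseteq R$---is handled correctly. What this buys you is that you never need to invoke the second perfect pairing or verify the diagram commutativity $\langle\alpha x,y\rangle_2=\langle x,\beta y\rangle_1$; what the paper's route buys is that once that compatibility is stated, the annihilator identity is a two-line formal duality argument with no cocycle bookkeeping. Your parenthetical remark about naturality of transgression is in fact precisely the paper's approach in disguise.
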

\begin{proof}
Note that we have a natural isomorphism  $R/R\cap S_{(n+1)}\simeq RS_{(n+1)}/S_{(n+1)}$. By Corollary~\ref{cor:perfect} and Remark~\ref{rmk:Pontryagin}, the two rows of the following diagam provide perfect pairings
\[
\xymatrix{
\langle\cdot,\cdot\rangle_1\colon RS_{(n+1)}/S_{(n+1)} \ar@{} \ar@{->}[d]^{\alpha} &\times &   \Phi^n(S/R) \ar@{->}[r] &\F_p\\ 
\langle\cdot,\cdot\rangle_2\colon S_{(n)}/S_{(n+1)} \ar@{} &\times &   \Phi^n(S/S_{(n)}) \ar@{->}[r] \ar@{->}[u]^{\beta}&\F_p.\\
}
\]
This diagram is commutative in the following sense: for each 
\([r] \in RS_{(n+1)} / S_{(n+1)}\), \(r \in RS_{(n+1)}\), and each 
\(t \in \Phi^n(S / S_{(n)})\), we have
\[
\langle \alpha([r]), t \rangle_2 = \langle [r], \beta(t) \rangle_1.
\]

Here \(\alpha\) is the natural map induced by the inclusion 
\(RS_{(n+1)} \to S_{(n)}\), and \(\beta\) is the natural map induced by 
inflation as described in the beginning of Section 3 in the paragraph 
below the 5-term exact sequence.

By \cite[Lemma 2.3]{Ef}, the diagram induces a non-degenerate bilinear map 
\[ {\rm coker}(\alpha)\times {\rm ker}(\beta)\to \F_p.\]
 The statement follows. 

Alternatively, we can proceed as follows. We shall identify $\Phi^n(S / S_{(n)})$ (respectively, $\Phi^n(S/R)$) with the Pontryagin dual of  $S_{(n)}/S_{(n+1)}$ (respectively, $RS_{(n+1)}/S_{(n+1)}$). Let ${\rm Ann}({\rm im}\alpha)$ be the annihilator of ${\rm im}\alpha$: 
\[
{\rm Ann}({\rm im}\alpha)=\{y\in \Phi^n(S / S_{(n)}) \mid \langle z,y\rangle_2=0, \forall z\in {\rm im}\alpha\}.
\]
If $y\in \ker \beta$ and $z=\alpha x\in {\rm im} \alpha$, then $\langle z,y\rangle_2=\langle \alpha x,y\rangle_2=\langle x,\beta y\rangle_1=\langle x,0\rangle_1=0$. Hence $\ker \beta\subseteq {\rm Ann}({\rm im}\alpha)$. We show that in fact $\ker \beta = {\rm Ann}({\rm im}\alpha)$. Suppose that $y\not \in \ker \beta$, then $\beta y\not=0$. Since $\langle \cdot, \cdot \rangle_1$ is non-degenerate, there exists $x\in RS_{(n+1)}/S_{(n+1)} $ such that $\langle x,  \beta y \rangle_1\not=0$. Hence
\[
0\not= \langle x,  \beta y \rangle_1= \langle \alpha x,  y \rangle_2.
\]
This implies that $y\not \in {\rm Ann}({\rm im}\alpha)$. Thus $\ker \beta = {\rm Ann}({\rm im}\alpha)$. 
By \cite[Theorem 2.17]{Morris}, we see that $\ker\beta$ and ${\rm coker}(\alpha)$ are Pontryagin duals to each other. The pairing  ${\rm coker}(\alpha)\times {\rm ker}(\beta)\to \F_p$ is indeed perfect.
\end{proof}

\begin{cor}
\label{cor:non-degenerate pairing}  Let $S$ be a free pro-$p$-group and let $R$ be a normal subgroup of $S$ with $R\leq S_{(n)}$. Let $G=S/R$. 
Then the pairing 
\[
G_{(n)}/G_{(n+1)} \times  \ker(\Phi^n(G/G_{(n)})\to \Phi^n(G))\to \F_p
\]
is non-degenerate.
\end{cor}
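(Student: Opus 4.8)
The plan is to show that, thanks to the hypothesis $R\leq S_{(n)}$, the pairing in the statement is — after the evident identifications — exactly the non-degenerate pairing produced in the preceding corollary, so that non-degeneracy is immediate.

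First I would record the identifications. Since the $p$-Zassenhaus filtration is compatible with quotients, $G_{(k)}=S_{(k)}R/R$ for all $k\geq 1$; as $R\leq S_{(n)}$ this gives $G_{(n)}=S_{(n)}/R$ and $G_{(n+1)}=S_{(n+1)}R/R$, hence canonical isomorphisms $G/G_{(n)}\cong S/S_{(n)}$, $G=S/R$, and $G_{(n)}/G_{(n+1)}\cong S_{(n)}/RS_{(n+1)}$. Since $\Phi^n(-)$ depends only on the group, the inflation $\Phi^n(G/G_{(n)})\to\Phi^n(G)$ is identified with the inflation $\beta\colon\Phi^n(S/S_{(n)})\to\Phi^n(S/R)$ from the preceding corollary, so the two factors of our pairing become $S_{(n)}/RS_{(n+1)}$ and $\ker\beta$. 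Applying Lemma~\ref{lem:3.1} together with the observation made just after the $5$-term exact sequence in Section~3 (that $\ker(\Phi^n(Q/N)\to\Phi^n(Q))=\ker(\Phi^n(Q/N)\to H^2(Q))$ for any profinite $Q$ and normal $N$) to the group $S/R$ with its subgroup $S_{(n)}/R=(S/R)_{(n)}$, one moreover finds
\[
\ker\beta=\ker\bigl(\Phi^n(G/G_{(n)})\to\Phi^n(G/G_{(n+1)})\bigr)=\ker\bigl(\Phi^n(S/S_{(n)})\to\Phi^n(S/RS_{(n+1)})\bigr),
\]
the first of which is the kernel implicit in the definition of the pairing in the statement.

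Next I would verify that the pairing in the statement is carried to the pairing of the preceding corollary, namely the non-degenerate pairing on $S_{(n)}/RS_{(n+1)}\times\ker\beta$ obtained via \cite[Lemma 2.3]{Ef} by lifting the first argument along the quotient $S_{(n)}/S_{(n+1)}\to S_{(n)}/RS_{(n+1)}$ and applying the perfect pairing $\langle\cdot,\cdot\rangle_2$ on $S_{(n)}/S_{(n+1)}\times\Phi^n(S/S_{(n)})$ of Corollary~\ref{cor:perfect}. Here the explicit formula of Lemma~\ref{lem: pairing} does the work. Fix $\alpha\in\ker\beta$ and let $\bar\rho\colon S/S_{(n)}\to\overline{\U}(\sA)$ be the homomorphism attached to a defining system for $\alpha$. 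By the displayed equalities, $\alpha\in\ker(\Phi^n(S/S_{(n)})\to\Phi^n(S/RS_{(n+1)}))$, hence ${\rm inf}(\alpha)=0$ in $H^2(S/RS_{(n+1)})$ and $\bar\rho$ lifts (by Hoechsmann's lemma) to some $\rho\colon S/RS_{(n+1)}\to\U(\sA)$; since $RS_{(n+1)}/R=G_{(n+1)}$, this $\rho$ is precisely the kind of lift used in Lemma~\ref{lem: pairing} for $G$ with $N=G_{(n)}$, so $\langle\bar\sigma,\alpha\rangle=-\rho_{1,n+1}(\bar\sigma)$ for $\bar\sigma\in G_{(n)}/G_{(n+1)}$. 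On the other hand, by the proof of Corollary~\ref{cor:perfect}, $\langle\cdot,\cdot\rangle_2$ is the Section~3 pairing attached to $S$ and $N=S_{(n)}$, so Lemma~\ref{lem: pairing} describes it as well; composing $\rho$ with the quotient map $S/S_{(n+1)}\to S/RS_{(n+1)}$ produces a lift of $\bar\rho$ over $S/S_{(n+1)}$ (one exists anyway, since $H^2(S)=0$ forces ${\rm inf}(\alpha)=0$ in $\Phi^n(S/S_{(n+1)})\subseteq H^2(S/S_{(n+1)})$), and as the formula of Lemma~\ref{lem: pairing} is independent of the chosen lift it gives $\langle z,\alpha\rangle_2=-\rho_{1,n+1}(\bar\sigma)$ for every $z\in S_{(n)}/S_{(n+1)}$ mapping to $\bar\sigma$. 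Therefore $\langle\bar\sigma,\alpha\rangle=\langle z,\alpha\rangle_2$, which is precisely the value of the induced pairing of the preceding corollary at $(\bar\sigma,\alpha)$.

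Finally, the preceding corollary asserts that this induced pairing on $S_{(n)}/RS_{(n+1)}\times\ker\beta$ is non-degenerate, so the pairing in the statement is non-degenerate as well. I expect the main obstacle to be this middle step: keeping the chain of identifications straight and checking that the pairing built intrinsically from the transgression for $1\to G_{(n)}/G_{(n+1)}\to G/G_{(n+1)}\to G/G_{(n)}\to 1$ really coincides with the one constructed over $S$ in the preceding corollary (equivalently, the naturality of transgression along the quotient $S/S_{(n+1)}\to S/RS_{(n+1)}$). Everything else is routine bookkeeping.
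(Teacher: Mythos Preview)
Your proposal is correct and follows essentially the same route as the paper: identify $G_{(n)}/G_{(n+1)}\cong S_{(n)}/RS_{(n+1)}$, $G/G_{(n)}\cong S/S_{(n)}$ and $\ker(\Phi^n(G/G_{(n)})\to\Phi^n(G))$ with $\ker\beta$, then invoke the preceding corollary. The paper's proof is a one-liner: it records the commutative diagram of short exact sequences
\[
\xymatrix{
1\ar[r] & S_{(n)}/RS_{(n+1)}\ar[r]\ar[d]^{\simeq} & S/RS_{(n+1)}\ar[r]\ar[d]^{\simeq} & S/S_{(n)}\ar[r]\ar[d]^{\simeq} & 1\\
1\ar[r] & G_{(n)}/G_{(n+1)}\ar[r] & G/G_{(n+1)}\ar[r] & G/G_{(n)}\ar[r] & 1
}
\]
and appeals to the previous corollary, relying implicitly on the naturality of the transgression pairing under this isomorphism of extensions. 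You go further and verify this compatibility explicitly via the formula of Lemma~\ref{lem: pairing}, which is a perfectly valid (and more self-contained) way to confirm that the two pairings coincide, but is not strictly necessary once the diagram is in place.
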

\begin{proof} The following diagram of exact sequences
\[
\xymatrix{
1\ar@{->}[r]  & S_{(n)}/RS_{(n+1)} \ar@{->}[r] \ar@{->}[d]_{\simeq} &S/RS_{(n+1)}\ar@{->}[r] \ar@{->}[d]_{\simeq} & S/S_{(n)} \ar@{->}[r] \ar@{->}[d]_{\simeq} &1\\
1\ar@{->}[r] & G_{(n)}/G_{(n+1)} \ar@{->}[r] &G/G_{(n+1)}\ar@{->}[r] & G/G_{(n)} \ar@{->}[r]&1
}
\]
commutes. The result then follows from the previous corollary. 
\end{proof}

\begin{thm} 
\label{thm:main}
Let $n$ be an integer $\geq 1$. Let $S$ be a free pro-$p$-group and let $R$ be a normal subgroup of $S$ with $R\leq S_{(n)}$. Let $G=S/R$. Then \[ G_{(n+1)}=\bigcap \ker (\rho\colon G\to \U(\sA)),\] 
where the intersection is taken as $\sA$ runs over the collection of all rank $n$ multiplicative systems with $A_{1,n+1}=\F_p$ and $\rho$ runs over all homomorphism $G\to \U(\sA)$.
\end{thm}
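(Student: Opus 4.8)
The plan is to split off the trivial case $n=1$ and then, for $n\ge 2$, to obtain the statement by feeding the non-degeneracy results of Section~3 into the equivalence of Theorem~\ref{thm:kernel-pairing}. For $n=1$ the hypothesis $R\le S_{(1)}=S$ imposes nothing, so $G=S/R$ is an arbitrary pro-$p$ group; a rank $1$ multiplicative system $\sA$ with $A_{1,2}=\F_p$ carries no pairings and gives $\U(\sA)\cong\U_2(\F_p)$. Thus the desired equality is precisely the $n=1$ assertion of Remark~\ref{rmk:2.5}, which is established there by exhibiting, for each $\sigma\notin G_{(2)}$, a homomorphism $G\to\U_2(\F_p)$ not killing $\sigma$. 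So in that case I would simply invoke Remark~\ref{rmk:2.5}.

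\textbf{The case $n\ge 2$.} Here I would apply Theorem~\ref{thm:kernel-pairing} to $G=S/R$. Its statement (2), in the instance $k=n$, is exactly the conclusion of the present theorem, so it is enough to check its statement (1): for every $k=2,\dots,n$ the pairing
\[
G_{(k)}/G_{(k+1)}\ \times\ \ker\bigl(\Phi^k(G/G_{(k)})\to\Phi^k(G/G_{(k+1)})\bigr)\longrightarrow\F_p
\]
is non-degenerate. The point is that every such $k$ is already handled by Corollary~\ref{cor:non-degenerate pairing}: since $R\le S_{(n)}\subseteq S_{(k)}$ whenever $k\le n$, that corollary applies with $k$ in place of $n$ and yields non-degeneracy of
\[
G_{(k)}/G_{(k+1)}\ \times\ \ker\bigl(\Phi^k(G/G_{(k)})\to\Phi^k(G)\bigr)\longrightarrow\F_p .
\]
It then remains only to identify the two right-hand factors, which is Lemma~\ref{lem:3.1} applied with $N=G_{(k)}$ (legitimate, since $G_{(k)}\le G_{(2)}$ as $k\ge 2$, and $G_{(k)}\cap G_{(k+1)}=G_{(k+1)}$), giving
\[
\ker\bigl(\Phi^k(G/G_{(k)})\to\Phi^k(G/G_{(k+1)})\bigr)=\ker\bigl(\Phi^k(G/G_{(k)})\to H^2(G)\bigr)=\ker\bigl(\Phi^k(G/G_{(k)})\to\Phi^k(G)\bigr).
\]
Hence statement (1) of Theorem~\ref{thm:kernel-pairing} holds, so statement (2) holds for all $k=1,\dots,n$, and the instance $k=n$ is the theorem. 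The easy inclusion $G_{(n+1)}\subseteq\bigcap\ker\rho$ is not a separate matter here: it is built into Theorem~\ref{thm:kernel-pairing} (coming from Lemma~\ref{lem: trivial on Zassenhaus subgroups}) and holds for any pro-$p$ group by Remark~\ref{rmk:2.5}.

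\textbf{Where the work is.} With the earlier results available, none of these steps is hard; the genuine content lies in Theorem~\ref{thm:Efrat} (used via Corollary~\ref{cor:perfect}, hence via Corollary~\ref{cor:non-degenerate pairing}) and in the equivalence of Theorem~\ref{thm:kernel-pairing}. The only points deserving care are: (i) checking that Corollary~\ref{cor:non-degenerate pairing} really does apply for every $k\le n$ --- this is exactly where the hypothesis $R\le S_{(n)}$, hence $R\le S_{(k)}$, is used --- and (ii) using Lemma~\ref{lem:3.1} to match the second factor $\ker(\Phi^k(G/G_{(k)})\to\Phi^k(G))$ of Corollary~\ref{cor:non-degenerate pairing} with the second factor $\ker(\Phi^k(G/G_{(k)})\to\Phi^k(G/G_{(k+1)}))$ required by Theorem~\ref{thm:kernel-pairing}(1).
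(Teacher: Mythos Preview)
Your proposal is correct and follows essentially the same route as the paper: the case $n=1$ is Remark~\ref{rmk:2.5}, and for $n\ge 2$ one feeds Corollary~\ref{cor:non-degenerate pairing} (applied for each $k\le n$, using $R\le S_{(n)}\subseteq S_{(k)}$) into the equivalence of Theorem~\ref{thm:kernel-pairing}. Your explicit invocation of Lemma~\ref{lem:3.1} to match the second factor $\ker(\Phi^k(G/G_{(k)})\to\Phi^k(G))$ with $\ker(\Phi^k(G/G_{(k)})\to\Phi^k(G/G_{(k+1)}))$ is exactly the implicit identification the paper relies on, so your write-up is just a more unpacked version of the paper's two-line proof.
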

\begin{proof} The case $n=1$ follows from Remark~\ref{rmk:2.5}. The case $n\geq 2$ follows from Theorem~\ref{thm:kernel-pairing} and Corollary~\ref{cor:non-degenerate pairing} .
\end{proof}
\section*{Acknowledgements}
We are grateful to the referee for his/her comments and valuable suggestions which
we have used to improve our exposition.


\end{document}